  \theoremstyle{plain}
  \newtheorem{theorem}{Theorem}[section]
  \newtheorem{lem}[theorem]{Lemma}
  \newtheorem{pro}[theorem]{Proposition}
  \newtheorem{cor}[theorem]{Corollary}
  \newtheorem{thmABC}{Theorem}
  \newtheorem*{con*}{Conjecture}
  \newtheorem{con}[thmABC]{Conjecture}
  \theoremstyle{remark}
  \newtheorem{rmk}[theorem]{Remark}
\theoremstyle{definition}
  \newtheorem{dfn}[theorem]{Definition}
  \newtheorem*{acknowledgements}{Acknowledgements}
  \numberwithin{equation}{section}
  \numberwithin{table}{section}
  \DeclareMathOperator*{\Ast}{\mathlarger{\scalerel*{{\star}}{\sum}}}
   \DeclareMathOperator{\des}{des}
  \DeclareMathOperator{\maj}{maj} \DeclareMathOperator{\Des}{Des}
  \DeclareMathOperator{\inv}{inv}
  \DeclareMathOperator{\imv}{imv} \DeclareMathOperator{\std}{std}
  \DeclareMathOperator{\kk}{k} \DeclareMathOperator{\den}{iden}
  \DeclareMathOperator{\denh}{den} \DeclareMathOperator{\exc}{exc}
  \DeclareMathOperator{\Exc}{Exc} \DeclareMathOperator{\N}{{\bf nexc}}
  \DeclareMathOperator{\E}{{\bf exc}} \DeclareMathOperator{\id}{id}
   \DeclareMathOperator{\iexc}{iexc}
  \DeclareMathOperator{\excabs}{exc^{abs}}
  \DeclareMathOperator{\negg}{neg} \DeclareMathOperator{\nden}{nden}
  \DeclareMathOperator{\nmaj}{nmaj} \DeclareMathOperator{\ndes}{ndes}
  \DeclareMathOperator{\fmaj}{fmaj} \DeclareMathOperator{\fdes}{fdes}
  \DeclareMathOperator{\dmaj}{dmaj} \DeclareMathOperator{\ddes}{ddes}
  \DeclareMathOperator{\DNeg}{DNeg} \DeclareMathOperator{\dneg}{dneg}
  \DeclareMathOperator{\dden}{dden} \DeclareMathOperator{\dexc}{dexc}
  \DeclareMathOperator{\nsp}{nsp}
\def\0{\hspace{-.12em}0}
\definecolor{dgreen}{HTML}{03297A}
\def\Nd{N}
\author[A.~Carnevale]{Angela Carnevale}
\address{School of Mathematics, Statistics and Applied Mathematics, National University of Ireland, Galway, Ireland}
	\email{angela.carnevale@nuigalway.ie}
\author[E.~Tielker]{Elena Tielker}
\address{Fakult\"at f\"ur Mathematik, Universit\"at Bielefeld, D-33501 Bielefeld, Germany
}
\email{etielker@math.uni-bielefeld.de}
 \title{On Denert's statistic} \date{}
\begin{document}
\maketitle
\thispagestyle{empty}

\begin{abstract}We show that the numerators  of genus zeta function associated with local hereditary orders studied by Denert can be described in terms of the joint
  distribution of Euler-Mahonian statistics on multiset permutations
  defined by Han. We use this result to deduce a reciprocity property
  for genus zeta functions of local hereditary orders whose associated
  composition is a rectangle. We also record a remarkable identity
  satisfied by genus zeta functions of local hereditary orders in
  terms of Hadamard products of genus zeta functions of maximal
  orders. Finally, we define Mahonian companions of excedance
  statistics on groups of signed and even-signed permutations.
  \end{abstract}

  \section{Introduction}
Recently, permutation statistics have found  applications  to various zeta functions in algebra;
see, for instance,~\cite{Berman/20,BC/17,CSV/18,SV/13,StasinskiVoll/14}.  An early instance of such applications arose from the
enumeration of ideals in hereditary orders encoded in so-called genus
zeta functions. It is known that local hereditary orders are parameterised by local invariants, which are integer compositions. In order to give an explicit expression for the
numerators of   genus zeta functions of such orders, Denert \cite{Denert/90}
defined a pair of statistics over  permutations.

Remarkably, for ``minimal'' (i.e.\ associated with the all-one composition) hereditary orders, the numerators of the associated genus zeta functions are, for a suitable choice of variables, Euler-Mahonian
polynomials over symmetric groups. This was first conjectured by
Denert in \cite{Denert/90} and then proved by Foata and Zeilberger in
\cite{FZ/90}.

Inspired by Denert's paper, Han~\cite{Han/91} gave a definition of a
\emph{Denert statistic} for multiset permutations, which together
with the classical excedance statistic is Euler-Mahonian. While Han's
result provides a Mahonian companion for the excedance statistic
already considered by MacMahon \cite{MacMahon/04} on multiset
permutations, it does not, to the best of our knowledge, provide a  combinatorial
interpretation of the numerators of Denert's genus zeta functions;  cf.~\cite[p.\ 25]{Han/91}.

This paper is devoted to a further study of Denert's statistic.
In the first part, we close the circle by showing that Denert's pair of
statistics (as originally defined) is indeed equidistributed with the
Euler-Mahonian statistics considered by Han on multiset
permutations; cf.~Theorem~\ref{Euler-Mahonian}. This gives an explicit description of the numerators of
the genus zeta functions of local hereditary orders with arbitrary
local invariants.

By results going back to MacMahon, our equidistribution result also implies a remarkable
identity involving Hadamard products of genus zeta functions of local
hereditary orders. Similar identities, also involving Eulerian
or Euler-Mahonian polynomials, have appeared in recent work on
so-called ask zeta functions \cite{Rossmann/18,cico} and zeta
functions associated with  quiver representations
\cite{LeeVoll/20}.

Generalisations of Euler-Mahonian identities to signed and even-signed permutations have been extensively studied (see, e.g.,~\cite{AdBrRo/01,Biagioli/03,Carnevale/17}). The remainder of this paper is devoted to generalisations of Denert's statistic which provide Mahonian companions to suitable excedance statistics on Coxeter groups of type $B$ and $D$.

 \smallskip
 
The paper is organised as follows. In Section~\ref{sec:preliminaries}
we collect some notation and preliminaries on permutation statistics on
multiset permutations, while in Section~\ref{sec:denstat} we recall
Denert's definitions of the statistics appearing in the numerators of
the genus zeta functions studied in \cite{Denert/90}.
Section~\ref{sec:main} is devoted to proving that these numerators are
indeed Euler-Mahonian polynomials.  In Section~\ref{sec:bd}, we define
analogues of Denert's statistics in types $B$ and  $D$. Together
with suitable excedance statistics, these are equidistributed with
Euler-Mahonian statistics on groups of signed and even-signed permutations, respectively. We conclude the paper with a few remarks in Section~\ref{sec:final}, including the aforementioned identity involving Hadamard products
satisfied by Denert's genus zeta functions.

\section{Notation and  preliminaries}\label{sec:preliminaries}

We set $[n]=\{ 1,\dots ,n \}$ and denote by $\{i_1,\dots,i_m\}_<$ a
set of increasing integers $i_1 < \dotsb < i_m$. We let $|S|$ denote the cardinality of a set $S$. For the remainder of this paper,  $\eta=(\eta_1,\dots ,\eta_r)$ is a fixed
composition of $n\in \mathbb{N}$ with $r$ parts. Given $\eta$, we let
$S_\eta$ denote the set of all permutations of the multiset
$$\{\underbrace{1,\dots ,1}_{\eta_1}, \dots ,\underbrace{r,
  \dots ,r}_{\eta_r}\}$$ comprising $\eta_1$ copies of $1$, $\eta_2$
copies of $2$, and so on.  In other words, a \emph{multiset
  permutation} in $S_\eta$ is a rearrangement of the ``trivial'' word
$\id^\eta={1}^{\eta_1}\cdots{r}^{\eta_r}\in S_{\eta}$.  Note that when
$\eta=(1,1,\dots ,1)$, $S_\eta$ is  the symmetric group $S_r$.  We will be
interested in several statistics on multiset permutations. 
We denote the \emph{descent set} of   $w = w_1\dotsb w_n\in S_{\eta}$ by
\begin{align*}
\Des(w)=\{ i\in [n-1] : w_i > w_{i+1} \}.
\end{align*}
The \emph{descent} and \emph{major index} statistics are
\begin{align*}
\des(w) = |\Des(w)| \quad \text{and} \quad \maj(w) = \sum_{i\in \Des(w)} i.
\end{align*}
Further, we define the descent set of a composition
$\Des(\eta):=\{\eta_1,\eta_1+\eta_2,\dots,\sum_{i=1}^{r-1}\eta_i\}$.

In the following, we recall a few definitions in order to define the pair of
statistics $(\denh, \exc)$, see also \cite{Han/91}. When $\eta$ is fixed, we will simply denote with $\id$ the trivial word $\id^\eta$ of the corresponding
set of multiset permutations.

A position $i\in [n]$ is an \emph{excedance} of $w\in S_{\eta}$ if the
$i$-th letter of $w$ is strictly greater than the $i$-th letter of the
trivial word $\id$.  We denote with $\Exc(w)$ the set of all excedances
of $w$ and with $\exc(w)$ its cardinality, viz.
\begin{equation}\label{def:exc}
\Exc(w) = \{ i\in [n] : w_i>\id_i \} \quad \mbox{and} \quad \exc(w)=|\Exc(w)|.
\end{equation}

\begin{dfn}
  Let $w\in S_{\eta}$.  
  The \emph{exceeding subword} of $w$ is 
  \begin{align*}
  \E(w):=w_{i_1}\cdots w_{i_k} \text{ for }\Exc(w)=\{i_1,\dots,i_k\}_<.
  \end{align*}
  The \emph{non-exceeding} subword of $w$ is
  \begin{align*}
  \N(w):=w_{j_1}\cdots w_{j_{n-k}} \text{ for }\{j_1,\dots,j_{n-k}\}_<:=[n]\setminus \Exc(w).
\end{align*}
\end{dfn}
For example, for $\eta= (3,2,2,3)$ and $w=4232314141$, the exceeding
subword is $\E(w)= 42334$ and the non-exceeding  subword is $\N(w)=21141$.

As usual,   we let $\inv(w)$ denote the
\emph{inversion number} of  a multiset permutation $w\in S_\eta$ $$\inv(w)=|\{(i,j):1\leq i<j\leq n,\,
w_i>w_j\}|$$ and $\imv(w)$ denote the \emph{weak inversion number}  of
$w$ $$\imv(w)=|\{(i,j):1\leq i<j\leq n,\, w_i\geq w_j\}|.$$
Generalising work of Foata and Zeilberger on permutations \cite{FZ/90}, Han gave the
following definition of a Denert statistic on multiset permutations.
\begin{dfn}[{\cite[D\'efinition 1.3]{Han/91}}]\label{def:denh} 
  Let $w\in S_{\eta}$. Denert's statistic on multiset permutations is given by
  $$\denh(w):=\sum_{i\in \Exc(w)} i +\imv(\E(w))+\inv(\N(w)).$$
  \end{dfn}
 For instance, $\denh(4232314141)=18+5+4=27$. Han proved that this
 statistic, together with the excedance number defined in
 \eqref{def:exc}, is equidistributed with the pair of statistics
 $(\maj,\des)$ on multiset permutations.
\begin{theorem}[{\cite[Th\'eor\`eme 1.4]{Han/91}}]\label{Theorem_Han}
The pair of statistics on $S_{\eta}$ $(\denh, \exc)$ is Euler-Mahonian, i.e.\
\begin{align*}
\sum_{w\in S_{\eta}} x^{\denh(w)} y^{\exc(w)} = \sum_{w\in S_{\eta}} x^{\maj(w)} y^{\des(w)}.
\end{align*}
\end{theorem}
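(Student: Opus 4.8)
The plan is to show that both sides of the claimed identity coincide with one and the same $q$-series. Write $A_\eta(x,y)=\sum_{w\in S_\eta}x^{\maj(w)}y^{\des(w)}$ and $D_\eta(x,y)=\sum_{w\in S_\eta}x^{\denh(w)}y^{\exc(w)}$. A classical computation going back to MacMahon and Carlitz --- the $q$-analogue of Worpitzky's identity for multiset permutations --- expresses $A_\eta$ in closed form:
\[
A_\eta(x,y)=(1-y)(1-yx)\cdots(1-yx^{n})\,\sum_{s\ge 0}y^{s}\prod_{i=1}^{r}\binom{s+\eta_i}{\eta_i}_{x},
\]
where $\binom{a}{b}_x$ is the Gaussian binomial coefficient in the variable $x$; this is proved by writing each factor $\binom{s+\eta_i}{\eta_i}_x$ as the generating function, by sum of entries, for weakly increasing words of length $\eta_i$ on the alphabet $\{0,1,\dots,s\}$, and then standardising. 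It therefore suffices to prove the \emph{same} identity with $A_\eta$ replaced by $D_\eta$.

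Extracting the coefficient of $y^s$ and using $\prod_{j=0}^{n}(1-yx^{j})^{-1}=\sum_{t\ge 0}y^{t}\binom{t+n}{n}_x$, the identity for $D_\eta$ is equivalent to the ``$q$-Worpitzky'' statement
\[
\prod_{i=1}^{r}\binom{s+\eta_i}{\eta_i}_{x}=\sum_{\substack{w\in S_\eta\\ \exc(w)\le s}}x^{\denh(w)}\binom{s-\exc(w)+n}{n}_{x}\qquad(s\ge 0).
\]
I would prove this bijectively. The left-hand side is the generating function (by total sum of entries) for $r$-tuples of weakly increasing words, the $i$-th of length $\eta_i$ on $\{0,\dots,s\}$ --- equivalently, for column-strict-type fillings of the Young-diagram shape of $\eta$ bounded by $s$. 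The right-hand side is the generating function for pairs $(w,\gamma)$ in which $w\in S_\eta$ has $\exc(w)\le s$ and $\gamma$ is a weakly decreasing word of length $n$ on $\{0,\dots,s-\exc(w)\}$, weighted by $x^{\denh(w)+|\gamma|}$. The required bijection is a Foata--Zeilberger-type correspondence: from a filling one recovers the excedance positions of $w$ and its exceeding and non-exceeding subwords $\E(w)$, $\N(w)$ from the comparisons among entries, the ``overshoot'' of the entries beyond what the excedance structure forces being recorded in the slack word $\gamma$; conversely $(w,\gamma)$ is turned back into a filling by redistributing $\sum_{i\in\Exc(w)}i$, $\imv(\E(w))$, $\inv(\N(w))$ and $|\gamma|$ among the cells. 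Carrying out this bookkeeping establishes the identity for $D_\eta$, and with it the theorem.

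The main obstacle is exactly this last step. In contrast with $\maj$, which is local --- a sum over the descent set --- Denert's statistic $\denh$ is built from the inversion numbers of the two subwords $\E(w)$ and $\N(w)$, whose very lengths and letters depend on $w$ in a non-local way, so one cannot simply read off how $\denh$ reacts to an elementary move. A workable way to organise the verification is by induction on the number of parts of $\eta$: adjoin $\eta_r$ new copies of a largest letter $r$ to the words of $S_{(\eta_1,\dots,\eta_{r-1})}$, note that each inserted copy of $r$ is automatically an excedance wherever it lands among the first $\eta_1+\dots+\eta_{r-1}$ positions, and track carefully its contribution to $\sum_{i\in\Exc}i$ and to $\imv(\E(w))$ as well as the effect of the resulting shift of positions on $\inv(\N(w))$; one then checks that the resulting recursion for $D_\eta$ agrees with the (classical) recursion satisfied by $A_\eta$, both starting from the base value $A_{(n)}=D_{(n)}=1$. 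Alternatively one can aim directly for an explicit bijection $S_\eta\to S_\eta$ intertwining $(\maj,\des)$ with $(\denh,\exc)$, extending the Foata--Zeilberger map from the case $\eta=(1,\dots,1)$; either route reduces the theorem to a finite, if delicate, combinatorial check.
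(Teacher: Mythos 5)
The paper does not prove this statement; it is imported verbatim from Han's dissertation (Th\'eor\`eme~1.4), so there is no in-paper argument to compare against. Your reduction step is correct: the Carlitz--MacMahon formula for $(\maj,\des)$ over $S_\eta$ is classical, and clearing denominators and comparing coefficients of $y^s$ yields exactly the $q$-Worpitzky-type identity
\[
\prod_{i=1}^{r}\binom{s+\eta_i}{\eta_i}_{x}\ =\ \sum_{\substack{w\in S_\eta\\ \exc(w)\le s}}x^{\denh(w)}\binom{s-\exc(w)+n}{n}_{x}\qquad(s\ge0).
\]

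The gap is that this reduced identity is never actually proved. The ``Foata--Zeilberger-type correspondence'' you describe in prose --- reading off $\Exc(w)$, $\E(w)$, $\N(w)$ from a bounded filling and distributing $\sum_{i\in\Exc(w)}i$, $\imv(\E(w))$, $\inv(\N(w))$, and a slack word $\gamma$ among its cells --- \emph{is} the content of Han's theorem, not a routine verification. You flag this yourself (``the main obstacle is exactly this last step'') and then merely name two alternative routes (induction on the number $r$ of parts, or a direct intertwining bijection extending the Foata--Zeilberger map on $S_n$) without executing either. The obstacle you correctly identify --- that $\denh$ aggregates inversion counts of the subwords $\E(w)$ and $\N(w)$, whose very lengths and letters depend on $w$ globally --- is precisely what makes the insertion/induction step delicate: adjoining $\eta_r$ new copies of the largest letter changes the excedance set in a position-dependent way, shifts the entries contributing to $\inv(\N(w))$, and can turn old non-excedances into excedances. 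None of that case analysis is supplied. As written, the proposal is a plausible plan whose crucial combinatorial step is missing, so it would have to fall back on citing Han --- which is exactly what the paper does.
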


\section{Denert's statistic}\label{sec:denstat}
Our first main result shows that the polynomials expressing the
numerators of the genus zeta functions of hereditary orders with local
invariants $\eta$ and $r$ coincide with the polynomials giving the
joint distribution of $(\denh,\exc)$ over $S_\eta$ in Theorem~\ref{Theorem_Han}. These numerators,
as defined by Denert in \cite[Theorem 11]{Denert/90}, involve
statistics on so-called $\eta$-admissible permutations, $\den$ and
$\iexc$, which we now define, closely following \cite{Denert/90}.

Let $\sigma\in S_n$. Following Denert, we visualise  $\sigma$ as the
matrix whose  $(i,j)$-th entry is defined as
\begin{align*}
(i,j)= \begin{cases}
1\quad &\text{ if }j=\sigma(i),\\
0\quad &\text{ otherwise.}
\end{cases}
\end{align*}
Note that this is the transpose of the usual permutation matrix
associated with $\sigma$. Nevertheless, to ease the translation
between Denert's and our notation, we will refer to it
as the matrix associated with $\sigma$. 
Since we are interested in statistics counting certain zero entries,
we think of this matrix as an  $n\times n$ grid, and we
refer to matrix entries as cells in this grid.

\begin{figure}[b]
    \begin{tikzpicture}[scale=.5]
      \draw[step=1.0,gray,very thin,xshift=-0.5cm,yshift=-0.5cm] (1,1)
      grid (11,11);
\begin{scope}
   \foreach \x in {(1,1), (1,2), (1,3), (1,4), (1,5), (1,6), (1,7),
    (2,1), (2,2), (2,3), (2,4), (2,5), (2,6), (2,7), (3,1), (3,2),
    (3,3), (3,4), (3,5), (3,6), (3,7), (4,1), (4,2), (4,3), (4,4),
    (4,5), (5,1), (5,2), (5,3), (5,4), (5,5), (6,1), (6,2), (6,3),
    (7,1), (7,2), (7,3)} {\draw[draw=gray,very
    thin,fill=black!20!,xshift=-0.5cm,yshift=-0.5cm,] \x rectangle
    ++(1,1);} \draw[thick,xshift=-0.5cm,yshift=-0.5cm]
  (1,8)--(4,8)--(4,6)--(6,6)--(6,4)--(8,4)--(8,1);
\end{scope}
\end{tikzpicture}%
\caption{For $n=10$ and $\eta=(3,2,2,3)$, the set  $[\succ]$ is
  coloured in grey, while the set  $[\preceq]$ is left blank.} \label{fig:path}
\end{figure}
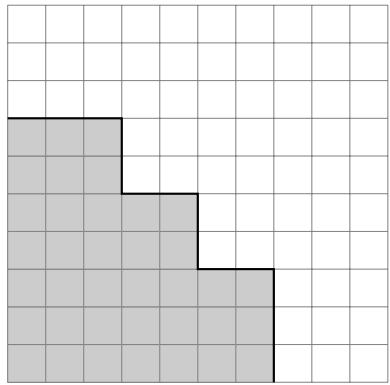

\begin{dfn}
  The \emph{projection} or \emph{block-map} with respect to the composition
  $\eta$ is the map $\pi_{\eta}:[n]\to [r]$ such that
\begin{align*}
 \sum_{k=1}^{\pi_{\eta}(i)-1}\eta_i< i \leq \sum_{k=1}^ {\pi_{\eta}(i)}\eta_i.
\end{align*}
 That is, $\pi_{\eta}(i)=1$ for $1\le i \le \eta_1$,
$\pi_{\eta}(i)=2$ for   $\eta_1 +1\le i \le \eta_1+\eta_2$  and so on.

By slight abuse of notation, we also denote by
$\pi_{\eta}\colon S_n \to S_{\eta}$ the projection from permutations
to multiset permutations
\begin{align*}
\pi_{\eta}(\sigma):= \pi_{\eta}(\sigma(1)) \dotsb \pi_{\eta}(\sigma(n)).
\end{align*}
\end{dfn} 
 For instance, $\pi_{(3,2,2,3)}(681\02435179) = 3441212134$.

 The block-map partitions a permutation matrix into $r^2$ blocks of
 size $\eta_i \times \eta_j$, $1\leq i,j \leq r$.
   For $k \in \mathbb{N}$ we define the \emph{$k$-th
   block-row} (resp.\ \emph{$k$-th block-column}) to be the set of
 pairs $(i,j)\in [n]^2$ such that $\pi_{\eta}(i)=k$ (resp.\
 $\pi_{\eta}(j)=k$).  Let further 
 \begin{align*}
   [\preceq ]&=\{(i,j):\pi_\eta(i)\leq \pi_\eta(j)\},\\
   [\prec ]&=\{(i,j):\pi_\eta(i)< \pi_\eta(j)\},\\
  [\succ ]&=\{(i,j):\pi_\eta(i)> \pi_\eta(j)\}.
\end{align*} 
We illustrate the sets $[\preceq]$ and $[\succ]$ in
Figure~\ref{fig:path}, see also \cite[Section
1]{Denert/90}.
Following Denert, we say that a permutation $\sigma\in S_n$ is
\emph{descending on $I\subseteq [n]^2$} if for all $(i,\sigma(i))$,
$(j,\sigma(j))\in I$, $i < j$ if and only if $\sigma(i) < \sigma(j)$.
For instance, $\sigma=681\02435179\in S_{(3,2,2,3)}$ is descending on
every block-row, but not on the first and last block-column, which can
be easily seen in Figure \ref{fig:admissible}.

The polynomials we are interested
in are generating polynomials on permutations which Denert calls
\emph{$\eta$-admissible permutations}. These are permutations whose
descent sets are contained in the descent set of the composition
$\eta$.

\begin{dfn} 
  A permutation $\sigma\in S_n$ is \emph{$\eta$-admissible} if it is
  descending on every block-row.  We will denote
  $S^{\eta}=\{\sigma \in S_n : \Des(\sigma)\subset\Des(\eta)\}$ the
  set of all $\eta$-admissible permutation in $S_n$.
\end{dfn}
For instance, $\sigma=681\02435179$ 
is
$(3,2,2,3)$-admissible, while  $\tau=681\04235179$ is not (see also Figure~\ref{fig:admissible}).
Note that the set of $\eta$-admissible permutations is a
  parabolic quotient of $S_n$; see, e.g., \cite[Section~2.4]{BjBr/05}.

\begin{figure}[bt]
 \begin{minipage}[b]{.45\linewidth} 
        \begin{center}
 \begin{tikzpicture}[scale=.5]
   \draw[step=1.0,gray,very thin,xshift=-0.5cm,yshift=-0.5cm] (1,1) grid (11,11);
 \begin{scope}
   \draw (6,10)node{$1$}; \draw (8,9)node{$1$}; \draw (10,8)node{$1$};
   \draw (2,7)node{$1$}; \draw (4,6)node{$1$}; \draw (3,5)node{$1$};
   \draw (5,4)node{$1$}; \draw (1,3)node{$1$}; \draw (7,2)node{$1$};
   \draw (9,1)node{$1$}; \draw[thick,xshift=-0.5cm,yshift=-0.5cm]
   (1,8)--(4,8)--(4,6)--(6,6)--(6,4)--(8,4)--(8,1);
   \draw[dashed,thick,xshift=-0.5cm,yshift=-0.5cm] (4,11)--(4,8)
   (4,8)-- (11,8) (6,11)--(6,6) (6,6)--(11,6) (8,11)--(8,4)
   (8,4)--(11,4) (1,6)--(4,6) (4,6)--(4,1) (1,4)--(6,4) (6,4)--(6,1);
 \end{scope}
 \end{tikzpicture}
 	\end{center}
 \end{minipage}
 \begin{minipage}[b]{.45\linewidth} 
        \begin{center}
 \begin{tikzpicture}[scale=.5]
   \draw[step=1.0,gray,very thin,xshift=-0.5cm,yshift=-0.5cm] (1,1) grid (11,11);
 \begin{scope}
 \draw (6,10)node{$1$};
 \draw (8,9)node{$1$};
 \draw (10,8)node{$1$};
 \draw (4,7)node{$1$};
 \draw (2,6)node{$1$};
 \draw (3,5)node{$1$};
 \draw (5,4)node{$1$};
 \draw (1,3)node{$1$};
 \draw (7,2)node{$1$};
 \draw (9,1)node{$1$};
 \draw[thick,xshift=-0.5cm,yshift=-0.5cm] (1,8)--(4,8)--(4,6)--(6,6)--(6,4)--(8,4)--(8,1);
 \draw[dashed,thick,xshift=-0.5cm,yshift=-0.5cm] (4,11)--(4,8) (4,8)-- (11,8) (6,11)--(6,6) (6,6)--(11,6) (8,11)--(8,4) (8,4)--(11,4) (1,6)--(4,6) (4,6)--(4,1) (1,4)--(6,4) (6,4)--(6,1);
 \end{scope}
 \end{tikzpicture}
 	\end{center}
 \end{minipage}
 \caption{Let $n=10$ and $\eta=(3,2,2,3)$. The left matrix corresponds
   to $\sigma= 681\02435179$ and the right matrix to
   $\tau=681\04235179$}
   \label{fig:admissible}
 \end{figure}
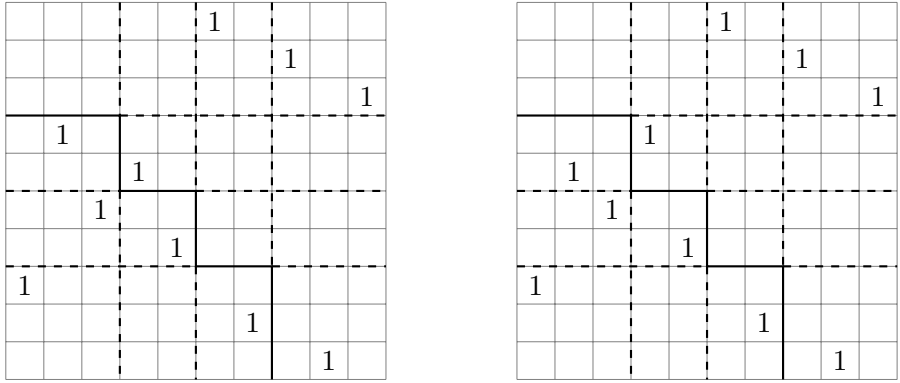

  It is well known that parabolic quotients and thus $\eta$-admissible
 permutations are in bijection with the set of multiset permutations
 $S_{\eta}$ via the map $ \sigma \mapsto \pi_\eta (\sigma^{-1}).$
 Indeed, the projection $\pi_\eta$ is injective on the set of
 permutations whose inverses have  descent sets contained in
 $\Des(\eta)$. The inverse of this map is defined in terms of the
 \emph{standardisation} $\std=\std_\eta:S_{\eta} \to S_n$. Informally,
 the standardisation of $w\in S_{\eta}$ is a permutation $\std(w)$
 which we obtain from $w$ by substituting the $\eta_1$ $1$s from left to right
 with $1, \dots, \eta_1$, the $\eta_2$ $2$s from left to right with
 $\eta_1+1, \dots, \eta_1+\eta_2$ and so on; see
 also, e.g.,~\cite[Section 2]{Carnevale/17}. We then obtain an $\eta$-admissible permutation by taking the inverse of $\std(w)$. That is, \begin{align}
                                             S^\eta &\overset{1-1}{\longleftrightarrow} S_\eta \nonumber\\
                                             \sigma &\mapsto \pi_\eta (\sigma^{-1})\label{eq:bijection}\\
                                             (\std_\eta (w))^{-1}
                                                    &\mapsfrom
                                                      w. \nonumber
\end{align}

For instance, for $\eta=(3,2,2,3)$ and
$\sigma= 681\02435179$, we have
$\sigma^{-1}=846571921\03$ and thus $\pi_{\eta}(\sigma^{-1})=4232314141$.
On the other hand, $\std(4232314141)= 846571921\03=\sigma^{-1}$, and
therefore 
$(\std(4232314141))^{-1}= \sigma$, as claimed.

This bijection is a key ingredient in the proof of
Theorem \ref{Euler-Mahonian}. We are now ready to introduce the first
of the two statistics needed to show our main result.
\begin{dfn}
For $\sigma \in S_n$ and $\eta$ a composition of $n$ we define  
\begin{align*}
 I_\sigma=\{(i,\sigma(i))\in [\succ]\}=\{j: \pi_\eta(\sigma^{-1}(j)) > \pi_\eta(j)\}.
\end{align*}\end{dfn}
Note that $I_\sigma$ coincides with the set of  excedances of $\pi_\eta(\sigma^{-1}))$,
that is
 $I_\sigma=\Exc(\pi_\eta(\sigma^{-1}))$. Therefore, we denote its
 cardinality with
\begin{align*}
\iexc(\sigma) := |I_\sigma|.
\end{align*}
\begin{rmk}
  The statistic $\iexc$ appears as $\kk$ in \cite{Denert/90}.
  \end{rmk}

 \begin{figure}[t]
\begin{tikzpicture}[scale=.5]
  \draw[step=1.0,gray,very thin,xshift=-0.5cm,yshift=-0.5cm] (1,1) grid (11,11);
\begin{scope}
  \draw (6,10)node{$1$}; \draw (8,9)node{$1$}; \draw (10,8)node{$1$};
  \draw (2,7)node{$1$}; \draw (4,6)node{$1$}; \draw (3,5)node{$1$};
  \draw (5,4)node{$1$}; \draw (1,3)node{$1$}; \draw (7,2)node{$1$};
  \draw (9,1)node{$1$}; \foreach \x in
  {(6,7),(6,6),(6,5),(6,4),(8,7),(8,6),(8,5),(8,4),(8,3),(8,2),(10,7),(10,6),
    (10,5),(10,4),(10,3),(10,2),(10,1)}
  {\draw[draw=blue,fill=blue!40!,xshift=-0.5cm,yshift=-0.5cm,] \x
    rectangle ++(1,1);} \foreach \x in {(3,7),(5,5),(7,3)}
  {\draw[draw=red,fill=red!40!,xshift=-0.5cm,yshift=-0.5cm,] \x
    rectangle ++(1,1);} \draw[thick,xshift=-0.5cm,yshift=-0.5cm]
  (1,8)--(4,8)--(4,6)--(6,6)--(6,4)--(8,4)--(8,1);
  \draw[dashed,thick,xshift=-0.5cm,yshift=-0.5cm] (4,11)--(4,8)
  (4,8)-- (11,8) (6,11)--(6,6) (6,6)--(11,6) (8,11)--(8,4)
  (8,4)--(11,4) (1,6)--(4,6) (4,6)--(4,1) (1,4)--(6,4) (6,4)--(6,1);
\end{scope}
\end{tikzpicture}
\caption{Let $n=10$, $\eta=(3,2,2,3)$ and $\sigma=
  681\02435179$. Elements of  $\Nd^+_\sigma$ are marked in
  blue and elements of $\Nd^-_\sigma$ are marked in red.}
\label{fig:Ns}
\end{figure}
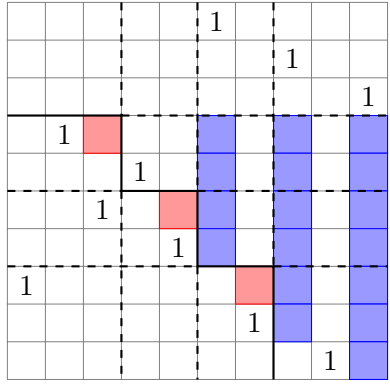

Further, we give here  the  definitions of the sets $\Nd^+_\sigma$ and $\Nd^-_\sigma$,
$$\Nd^+_\sigma=[\preceq ]\cap\{(i,j): \sigma(i)<j \mbox{ and }\sigma^{-1}(j)<i\},$$
and 
$$\Nd^-_\sigma=[\succ ]\cap\{(i,j): \sigma(i)<j \mbox{ and }\sigma^{-1}(j)>i\}.$$
Note that Denert uses the same notation for the cardinalities of these
sets; cf.~\cite[Section~2]{Denert/90}.
Figure~\ref{fig:Ns} illustrates $\Nd_{\sigma}^+$ and $\Nd_{\sigma}^-$
for a permutation in $S_{(3,2,2,3)}$, where we marked elements of
$\Nd_{\sigma}^+$ and $\Nd_{\sigma}^-$ as coloured cells in the
permutation matrix of $\sigma$.\

The statistic introduced in the next definition implicitly appeared in
the numerators of Denert's genus zeta functions. For this reason, we
refer to it as Denert's statistic (see also
Proposition~\ref{genus_zeta_function}).
\begin{dfn} 
For $\sigma\in S^\eta$,  Denert's statistic is defined as 
$$\den(\sigma) = \sum_{j\in I_\sigma} j + |\Nd^+_\sigma| - |\Nd^-_\sigma| - \iexc({\sigma}).$$
\end{dfn}
 For instance, for $\sigma=681\02435179$, $\den(\sigma)=18+17-3-5=27$,
 see also Figure \ref{fig:Ns}.
 
 Note that thanks to the map \eqref{eq:bijection}
 $\sigma\mapsto \pi_{\eta}(\sigma ^{-1})$, we obtain a statistic on
 the set of multiset permutations. Our goal is to show that the
 statistic obtained in this way is indeed Han's statistic from
 Definition~\ref{def:denh}, which justifies our notation.

 As mentioned above, Denert's
 statistic appears in the numerators of  genus zeta
 functions of local hereditary orders.  In the next subsection, we recall the definition of such zeta functions and the main result of  \cite{Denert/90}.
\subsection{Genus zeta functions of local hereditary orders}
 For a composition $\eta$ of $n$, set
$$W_\eta(x,y):=\frac{\sum_{\sigma \in S^{\eta}} x^{\den (\sigma)} y^{\iexc({\sigma})}}{\prod_{0\leq j \leq n-1}(1-x^i y)} \in \mathbb Q(x,y).
 $$
 Then \cite[Theorem 11]{Denert/90} is a closed formula for the genus
 zeta function of a local hereditary order in terms of the rational
 functions $W_\eta$.

 We briefly recall here the relevant definitions, the aforementioned
 result and a sketch of its proof.

 Let $K$ be a local field and $R$ be its ring of integers. Let $A$ be
 a central simple algebra over $K$. Then $A$ is isomorphic to $M_n(D)$
 for a unique integer $n$ and division $K$-algebra $D$. Let $\Delta$
 be the unique maximal order in $D$ and let $\mathfrak p$ be the
 unique maximal two-sided ideal of $\Delta$. Write
 $q=|\Delta/\mathfrak p|$.

 Given an $R$-order $\Theta$ in $A$, the genus zeta function of
 $\Theta$ is the Dirichlet series $Z_\Theta(s)= \sum |\Theta:\mathcal
 L|^{-s},$ where the sum ranges over integral free ideals of $\Theta$; cf.\
 \cite[Definition 3.1]{Denert/90}.
It is known that hereditary orders in $A$ are parameterised by
so-called local invariants, which are 
compositions of $n$. Given any such composition $\eta$, an explicit
description of a
hereditary order $\Theta^\eta$ with local invariant parameterised by an integer composition $\eta$ can be
found in \cite[Theorem 7]{Denert/90}.
\begin{pro}\label{genus_zeta_function}
$\displaystyle
Z_{\Theta^\eta}(s) = W_\eta(q,q^{-ns}).$
\end{pro}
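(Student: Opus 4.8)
The plan is to show that this is essentially a restatement of Denert's main theorem, so the task is to translate between her notation and ours and to extract the right substitution of variables. First I would recall from \cite[Theorem~11]{Denert/90} the closed formula for the genus zeta function $Z_{\Theta^\eta}(s)$: Denert expresses it as a rational function in $q^{-ns}$ whose numerator is a generating polynomial over a set of permutations, summed against two statistics. The key points to verify are: (i) the indexing set in Denert's formula is precisely the set $S^\eta$ of $\eta$-admissible permutations (those $\sigma\in S_n$ with $\Des(\sigma)\subseteq\Des(\eta)$), since these are the representatives of the parabolic quotient her combinatorics is built on; (ii) the two statistics appearing in Denert's numerator are, in our notation, $\den$ and $\iexc$ — for $\iexc$ this is the content of the remark that it is Denert's $\kk$, and for $\den$ one must check that Denert's definition of her numerator statistic (built from $\sum_{j\in I_\sigma} j$ together with the correction terms coming from $|\Nd^+_\sigma|$, $|\Nd^-_\sigma|$ and $\iexc(\sigma)$) coincides with the combination written in our Definition of $\den$; (iii) the denominator in Denert's formula matches $\prod_{0\le j\le n-1}(1-x^i y)$ after the substitution.

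Next I would make the substitution explicit. The ideal norm $|\Theta:\mathcal L|$ is a power of $q$, so each term $|\Theta:\mathcal L|^{-s}$ is a monomial in $q^{-s}$; grouping free ideals by the relevant combinatorial data, the Dirichlet series becomes a rational function in $q$ and $q^{-s}$. Comparing exponents shows that the variable $x$ in $W_\eta(x,y)$ specialises to $q$ while the variable $y$ specialises to $q^{-ns}$ — the factor $n$ appearing because the generating variable in Denert's geometric series tracks ideals whose index jumps by $q^n$ at each step (this corresponds to passing through one full "period" of the hereditary order). Substituting $x=q$, $y=q^{-ns}$ into
\[
W_\eta(x,y)=\frac{\sum_{\sigma\in S^\eta}x^{\den(\sigma)}y^{\iexc(\sigma)}}{\prod_{0\le j\le n-1}(1-x^i y)}
\]
then reproduces Denert's closed formula verbatim, which gives $Z_{\Theta^\eta}(s)=W_\eta(q,q^{-ns})$.

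The main obstacle is step (ii): reconciling the \emph{definition} of Denert's numerator statistic with our $\den$. Denert does not isolate a single statistic called "$\den$"; rather, the exponent appearing in her numerator is assembled from several pieces (the sum of excedance positions and signed contributions of the sets $\Nd^\pm$), and one must check carefully, referring to \cite[Section~2 and Theorem~11]{Denert/90}, that after collecting these pieces and accounting for the $-\iexc(\sigma)$ shift, one obtains exactly $\den(\sigma)=\sum_{j\in I_\sigma}j+|\Nd^+_\sigma|-|\Nd^-_\sigma|-\iexc(\sigma)$. A secondary subtlety is bookkeeping for the denominator: one must confirm that the product of Euler-type factors coming from Denert's computation of the free-ideal count is literally $\prod_{0\le j\le n-1}(1-q^i\,q^{-ns})$, with no off-by-one discrepancy in the range of $j$ or in which exponent carries the $n$. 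Once these identifications are in place, the proposition follows immediately by substitution, so no further computation is needed; I would present it as: "Apply \cite[Theorem~11]{Denert/90}, rewrite its numerator and denominator in the notation of Section~\ref{sec:denstat}, and specialise $x\mapsto q$, $y\mapsto q^{-ns}$."
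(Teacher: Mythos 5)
Your proposal correctly identifies the proposition as a dictionary between Denert's Theorem~11 and the rational function $W_\eta$, and you rightly flag the matching of $\den$ and $\iexc$ with Denert's statistics as the crux. However, the paper's proof does not simply cite the \emph{statement} of Theorem~11 and translate; it starts from an \emph{intermediate} expression in Denert's proof, namely
\[
Z_{\Theta^\eta}(s) = \sum_{\sigma \in S^{\eta}} q^{|N_{\sigma}^+|-|N_{\sigma}^-|} \sum_{\substack{\lambda\in\mathbb{N}^n\\  \lambda_j>0 \text{ if }j\in I_{\sigma}}}  \prod_{1\leq j\leq n}(q^{j-1-ns})^{\lambda_j},
\]
and then performs an inclusion-exclusion to evaluate the inner constrained geometric series, arriving at the closed form
\[
\left( \prod_{1\leq j \leq n} (1-q^{j-1}t) \right)^{-1} \prod_{j\in I_{\sigma}} q^{j-1}t, \qquad t=q^{-ns}.
\]
The product $\prod_{j\in I_\sigma} q^{j-1-ns}$ is precisely the origin of the exponent $\sum_{j\in I_\sigma}j-(1+ns)\iexc(\sigma)$, which together with $|N_\sigma^+|-|N_\sigma^-|$ reassembles as $\den(\sigma)-ns\,\iexc(\sigma)$. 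Your claim that ``no further computation is needed'' elides this step: the identification of $\den$ is not a notational coincidence but the result of actually closing the $\lambda$-sum and collecting exponents. So your high-level plan is sound, and both routes lean on Denert, but the paper's proof is a re-derivation from an earlier stage of Denert's argument rather than a restatement of her final theorem; you would have had to fill in the inclusion-exclusion to turn your plan into a proof.
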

\begin{proof}[(Sketch of) Proof of Proposition
  \ref{genus_zeta_function}] Following  Denert's proof of \cite[Theorem
  11]{Denert/90}, we have
\begin{align*}
Z_{\Theta^\eta}(s) = \sum_{\sigma \in S^{\eta}}
  q^{|N_{\sigma}^+|-|N_{\sigma}^-|}
  \sum_{\substack{\lambda\in\mathbb{N}^n\\  \lambda_j>0 \text{ if
  }j\in I_{\sigma}}}  \prod_{1\leq j\leq n}(q^{j-1-ns})^{\lambda_j}.
\end{align*}
Setting $t:=q^{-ns}$, with an inclusion-exclusion argument we obtain 
\begin{align*}
  \sum_{\substack{\lambda\in\mathbb{N}^n\\ \lambda_j>0 \text{ if }j\in
  I_{\sigma}}}&  \prod_{1\leq j\leq n}(q^{j-1}t)^{\lambda_j} =
                \sum_{\lambda\in\mathbb{N}^n}  \prod_{1\leq j\leq n}(q^{j-1}t)^{\lambda_j} - \sum_{j\in I_{\sigma}} \sum_{\substack{\lambda\in\mathbb{N}^n\\ \lambda_j=0}} \prod_{1\leq j\leq n}(q^{j-1}t)^{\lambda_j}\\
              & + \sum_{\{j_1,j_2\}_<\subset I_{\sigma}}
                \sum_{\substack{\lambda\in\mathbb{N}^n\\  \lambda_{j_1}=\lambda_{j_2}=0}} \prod_{1\leq j\leq n}(q^{j-1}t)^{\lambda_j} - \dots (-1)^{|I_{\sigma}|} \sum_{\substack{\lambda\in\mathbb{N}^n\\ \lambda_j=0 \text{ if }j\in I_{\sigma}}} \prod_{1\leq j\leq n}(q^{j-1}t)^{\lambda_j}\\
              & = \left( \prod_{1\leq j \leq n} (1-q^{j-1}t) \right)^{-1} \left( 1 + \sum_{\emptyset \neq J\subseteq I_{\sigma}} (-1)^{|J|} \prod_{j\in J} (1-q^{j-1}t) \right)\\
              &= \left( \prod_{1\leq j \leq n} (1-q^{j-1}t) \right)^{-1} \prod_{j\in I_{\sigma}} q^{j-1}t.
\end{align*}
Therefore,
\begin{align*}
Z_{\Theta^\eta}(s) &= \frac{\sum_{\sigma \in S^{\eta}} q^{|N_{\sigma}^+|-|N_{\sigma}^-|} \prod_{j\in I_{\sigma}} q^{j-1-ns|I_{\sigma}|}}{\prod_{1\leq j \leq n} (1-q^{j-1-ns})}\\
&= \frac{\sum_{\sigma \in S^{\eta}} q^{|N_{\sigma}^+|-|N_{\sigma}^-|+
                                                                                                                                                                             \sum_{j\in I_{\sigma}} j-(1+ns)\iexc({\sigma})} }{\prod_{1\leq j \leq n} (1-q^{j-1-ns})}\\
  &=\frac{\sum_{\sigma \in S^{\eta}} q^{\den (\sigma)-ns\iexc({\sigma})}}{\prod_{0\leq i \leq n-1}(1-q^{i -ns})},
\end{align*}
as claimed.
\end{proof}

\section{Denert's genus zeta function and  Euler-Mahonian polynomials }\label{sec:main}
In this section we prove our theorem about the equidistribution
of $(\denh, \exc)$ over the set of multiset permutations $S_\eta$ and
that of $(\den, \iexc)$ over the set  of $\eta$-admissible
permutations $S^{\eta}$.

\begin{theorem} \label{Euler-Mahonian}
The pair of statistics $(\den, \iexc)$ is Euler-Mahonian, i.e.\
\begin{align*}
    \sum_{\sigma \in S^\eta} x^{\den(\sigma)} y^{\iexc(\sigma)} = \sum_{w \in S_\eta} x^{\denh(w)} y^{\exc(w)}.
\end{align*}
\end{theorem}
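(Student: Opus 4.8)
The natural approach is to exploit the bijection \eqref{eq:bijection} between $S^\eta$ and $S_\eta$ given by $\sigma\mapsto w:=\pi_\eta(\sigma^{-1})$, and to show that it transports the pair $(\den,\iexc)$ to the pair $(\denh,\exc)$ \emph{term by term}, not merely as a joint distribution. The excedance part is already free: we observed that $I_\sigma=\Exc(\pi_\eta(\sigma^{-1}))=\Exc(w)$, so $\iexc(\sigma)=\exc(w)$, and moreover $\sum_{j\in I_\sigma}j=\sum_{i\in\Exc(w)}i$. So the whole content is the identity
\begin{align}\label{eq:key}
|\Nd^+_\sigma|-|\Nd^-_\sigma|-\iexc(\sigma)=\imv(\E(w))+\inv(\N(w)).
\end{align}
Once \eqref{eq:key} is established, adding $\sum_{j\in I_\sigma}j$ to both sides gives $\den(\sigma)=\denh(w)$, and the theorem follows by summing over the bijection.

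To prove \eqref{eq:key} I would analyse the cells of the grid block-column by block-column, keeping in mind that $w_j=\pi_\eta(\sigma^{-1}(j))$. First I would split the count $|\Nd^+_\sigma|-|\Nd^-_\sigma|$ according to whether the column index $j$ lies in $I_\sigma$ (i.e.\ $j$ is an excedance position of $w$) or not. For a fixed column $j$, the cells counted are those $(i,j)$ with $\sigma(i)<j$, subject to a block-condition ($[\preceq]$ or $[\succ]$) and a sign condition ($\sigma^{-1}(j)<i$ or $>i$); since $\sigma$ is $\eta$-admissible (descending on block-rows), within each block-row the positions $i$ with $\sigma(i)<j$ form an initial segment, which makes these counts tractable. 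The plan is to show that the columns $j\notin I_\sigma$ contribute exactly $\inv(\N(w))$: reading off the non-exceeding subword, an inversion $w_{j'}>w_{j''}$ with $j'<j''$ both non-excedances should correspond to a uniquely determined cell contributing $+1$ to $|\Nd^+_\sigma|$ (these columns sit in $[\preceq]$ after accounting for the standardisation). Symmetrically, the columns $j\in I_\sigma$ should contribute $\imv(\E(w))-\iexc(\sigma)$: here one gets the weak inversion number of the exceeding subword, and the $-\iexc(\sigma)$ subtracts one unit per excedance column, reconciling $\imv$ (which counts pairs $w_i\geq w_j$, including ``diagonal'' coincidences forced by the excedance condition) with the cells actually marked in $\Nd^-_\sigma$ and $\Nd^+_\sigma$. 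The combinatorial translation between ``cell $(i,j)$ with $\sigma(i)<j$ and a block/sign condition'' and ``(weak) inversion of a subword'' is exactly where the standardisation map $\std_\eta$ and the descending-on-block-rows property have to be used carefully.

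The main obstacle, I expect, is bookkeeping the $[\preceq]$ versus $[\succ]$ dichotomy simultaneously with the sign conditions $\sigma^{-1}(j)\lessgtr i$, and in particular verifying that the ``$-\iexc(\sigma)$'' correction is precisely what converts strict inversions into the weak inversions $\imv$ on the exceeding subword while not disturbing the $[\preceq]$-count that produces $\inv(\N(w))$. A clean way to manage this is to fix $\sigma\in S^\eta$, set $w=\pi_\eta(\sigma^{-1})$, and for each ordered pair of positions in $w$ decide which of $\Nd^+_\sigma$, $\Nd^-_\sigma$, or neither it lands in, thereby building an explicit sign-reversing/weight-preserving correspondence; the $\eta$-admissibility guarantees that within a block the relevant order is the identity, so no ``internal'' inversions are created or lost. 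I would also sanity-check the whole argument on the running example $\sigma=681\02435179$, $w=4232314141$, where both sides equal $27$ and the decomposition $27=18+5+4$ should match $18+(5+4)$ with $\imv(\E(w))+\inv(\N(w))=5+4$ and $|\Nd^+_\sigma|-|\Nd^-_\sigma|-\iexc(\sigma)=17-3-5=9=5+4$.
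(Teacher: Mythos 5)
Your high-level reduction is exactly the paper's: pass through the bijection \eqref{eq:bijection}, note $\iexc(\sigma)=\exc(w)$ and $\sum_{j\in I_\sigma}j=\sum_{i\in\Exc(w)}i$, and reduce to the single identity
$$|\Nd^+_\sigma|-|\Nd^-_\sigma|-\iexc(\sigma)=\imv(\E(w))+\inv(\N(w))$$
for $w=\pi_\eta(\sigma^{-1})$. The gap is in how you propose to prove this residual identity.

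You want to split the count according to whether the \emph{cell's column index} $j$ lies in $I_\sigma$, and to show that the non-excedance columns of $\Nd^+_\sigma$ account for $\inv(\N(w))$ while the excedance columns account for $\imv(\E(w))-\iexc(\sigma)$. This fails already on the running example $\sigma=681\02435179$, $\eta=(3,2,2,3)$: here $\Exc(w)=\{1,2,3,5,7\}$, so the non-excedance columns are $\{4,6,8,9,10\}$, and \emph{all} $17$ cells of $\Nd^+_\sigma$ lie in columns $6,8,10$, whereas $\inv(\N(w))=4$; dually, no cell of $\Nd^+_\sigma$ lies in an excedance column, while all three cells of $\Nd^-_\sigma$ do, so the excedance columns contribute $0-3-5=-8\neq 0=\imv(\E(w))-\iexc(\sigma)$. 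The totals agree ($17-3-5=4+5$), but the per-column split you envision does not produce the two summands $\inv(\N(w))$ and $\imv(\E(w))-\iexc(\sigma)$ separately, and your proposed sanity check (which only compares totals) would not detect this.

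The paper partitions $\Nd^+_\sigma$ by a \emph{row-based} criterion, not a column-based one: whether $(i,\sigma(i))\in[\preceq]$ or $[\succ]$, i.e.\ whether the \emph{source column} $\sigma(i)$ of the cell's row (rather than the cell's own column $j$) is an excedance position of $w$. This is the partition into $\Nd^+_\sigma[\preceq]$ and $\Nd^+_\sigma[\succ]$. Lemma~\ref{Lem=<} then shows $|\Nd^+_\sigma[\preceq]|=\inv(\N(w))$, and Lemma~\ref{Lem>} shows $|\Nd^+_\sigma[\succ]|=\imv(\E(w))+|\Nd^-_\sigma|+\iexc(\sigma)$; subtracting gives the identity. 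The second lemma is genuinely nontrivial and rests on a four-step argument via the auxiliary sets $M^=_\sigma$, $M^>_\sigma$, $U_\sigma(l)$, $U^{-1}_\sigma(l)$, including the matrix-symmetry fact $|U_\sigma(l)|=|U_\sigma^{-1}(l)|$ for permutation matrices — machinery that your plan does not anticipate and that cannot be bypassed by the column-by-column bookkeeping you describe.
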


\noindent In preparation for the proof, we further partition the set
$\Nd^+_\sigma$ 
into
\begin{equation*}
  \Nd^+_\sigma [\preceq] = \{ (i,j) : \sigma(i)<j, \sigma^{-1}(j)<i, \pi_\eta(i) \leq \pi_\eta(j), \underbrace{\pi_\eta(i)\leq \pi_\eta(\sigma(i))}_{\text{i.e. }(i,\sigma(i))\in [\preceq]}\}
\end{equation*}
and
\begin{equation*}
    \Nd^+_\sigma [\succ] = \{ (i,j) : \sigma(i)<j, \sigma^{-1}(j)<i, \pi_\eta(i) \leq \pi_\eta(j), \underbrace{\pi_\eta(i) > \pi_\eta(\sigma(i))}_{\text{i.e. }(i,\sigma(i))\in [\succ]}\},
\end{equation*}
see Figure \ref{fig:NPlus} for an example. 
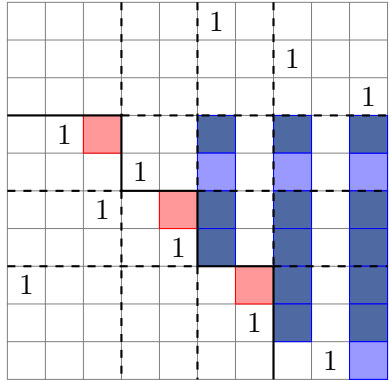
\begin{figure}[h]
\begin{center}
\begin{tikzpicture}[scale=.5]
  \draw[step=1.0,gray,very thin,xshift=-0.5cm,yshift=-0.5cm] (1,1) grid (11,11);
\begin{scope}
  \draw (6,10)node{$1$}; \draw (8,9)node{$1$}; \draw (10,8)node{$1$};
  \draw (2,7)node{$1$}; \draw (4,6)node{$1$}; \draw (3,5)node{$1$};
  \draw (5,4)node{$1$}; \draw (1,3)node{$1$}; \draw (7,2)node{$1$};
  \draw (9,1)node{$1$}; \foreach \x in {(6,7),(8,7)}
  {\draw[draw=blue,fill=dgreen!70!,xshift=-0.5cm,yshift=-0.5cm,] \x
    rectangle ++(1,1);} \foreach \x in {(6,6),(8,6),(10,6)}
  {\draw[draw=blue,fill=blue!40!,xshift=-0.5cm,yshift=-0.5cm,] \x
    rectangle ++(1,1);} \foreach \x in
  {(6,5),(6,4),(8,5),(8,4),(8,3),(8,2),(10,7),(10,5),(10,4),(10,3),(10,2)}
  {\draw[draw=blue,fill=dgreen!70!,xshift=-0.5cm,yshift=-0.5cm,] \x
    rectangle ++(1,1);} \foreach \x in {(10,1)}
  {\draw[draw=blue,fill=blue!40!,xshift=-0.5cm,yshift=-0.5cm,] \x
    rectangle ++(1,1);} \foreach \x in {(3,7),(5,5),(7,3)}
  {\draw[draw=red,fill=red!40!,xshift=-0.5cm,yshift=-0.5cm,] \x
    rectangle ++(1,1);} \draw[thick,xshift=-0.5cm,yshift=-0.5cm]
  (1,8)--(4,8)--(4,6)--(6,6)--(6,4)--(8,4)--(8,1);
  \draw[dashed,thick,xshift=-0.5cm,yshift=-0.5cm] (4,11)--(4,8)
  (4,8)-- (11,8) (6,11)--(6,6) (6,6)--(11,6) (8,11)--(8,4)
  (8,4)--(11,4) (1,6)--(4,6) (4,6)--(4,1) (1,4)--(6,4) (6,4)--(6,1);
\end{scope}
\end{tikzpicture}%
\caption{Let $n=10$ and $\eta=(3,2,2,3)$. For $\sigma= 681\02435179$ the set $\Nd^+_\sigma[\preceq]$ is marked in blue, the set $\Nd^+_\sigma[\succ]$ is marked in dark blue, while the set  $\Nd^-_\sigma$ is marked in red.}
\label{fig:NPlus}
\end{center}
\end{figure}

The following technical lemmata are key to show 
that  $ \den(\sigma) =\denh(\pi_\eta(\sigma^{-1}))$.
We show the latter identity as a result of finer identities, starting
with the
following.
\begin{lem} \label{Lem=<}
  Let $\eta$ be a composition of $n$ and   $\sigma \in S^{\eta}$. Then
\begin{align*}
     |\Nd^+_\sigma [\preceq]| = \inv(\N(\pi_\eta(\sigma^{-1}))).
\end{align*} 
\end{lem}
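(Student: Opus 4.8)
The plan is to translate both sides of the claimed identity into statements about the matrix grid of $\sigma$ and then exhibit an explicit bijection between the cells counted on the left and the inversions counted on the right. First I would unwind the right-hand side: writing $w=\pi_\eta(\sigma^{-1})$, an inversion of the non-exceeding subword $\N(w)$ is a pair of positions $a<b$, both \emph{non-excedances} of $w$ (i.e.\ $w_a\le\id_a$ and $w_b\le\id_b$, equivalently $\pi_\eta(\sigma^{-1}(a))\le\pi_\eta(a)$ and likewise for $b$), with $w_a>w_b$, that is $\pi_\eta(\sigma^{-1}(a))>\pi_\eta(\sigma^{-1}(b))$. I would then use the bijection \eqref{eq:bijection} to re-express this purely in terms of $\sigma$: setting $i=\sigma^{-1}(a)$, $j=\sigma^{-1}(b)$ — wait, more naturally, for a non-excedance position $a$ of $w$, the cell to look at in the grid of $\sigma$ is $(\sigma^{-1}(a),a)$, so each inversion of $\N(w)$ corresponds to a pair of grid cells of $\sigma$ lying in columns $a<b$, both in $[\preceq]$ (since $a,b$ are non-excedances of $w$ translates to $(\sigma^{-1}(a),a),(\sigma^{-1}(b),b)\in[\preceq]$ after using $\pi_\eta(\sigma^{-1}(\cdot))$ versus $\pi_\eta(\cdot)$), with $\pi_\eta(\sigma^{-1}(a))>\pi_\eta(\sigma^{-1}(b))$.

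Next I would unwind the left-hand side. A cell $(i,j)\in\Nd^+_\sigma[\preceq]$ satisfies $\sigma(i)<j$, $\sigma^{-1}(j)<i$, $\pi_\eta(i)\le\pi_\eta(j)$ and $\pi_\eta(i)\le\pi_\eta(\sigma(i))$. The two inequalities $\sigma(i)<j$ and $\sigma^{-1}(j)<i$ say that the cell $(i,j)$ has the "$1$" of its row strictly to its left and the "$1$" of its column strictly above it; combined with $\pi_\eta(i)\le\pi_\eta(j)$ this places $(i,j)$ in the weakly-upper-triangular region, and the last condition controls the block of the row's occupied cell. The candidate bijection sends such a cell $(i,j)$ to the pair consisting of the two occupied cells $(i,\sigma(i))$ and $(\sigma^{-1}(j),j)$, i.e.\ to the ordered pair of columns $(\sigma(i),j)$ — and I would check this lands exactly on an inversion of $\N(w)$ as described above. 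Conversely, given an inversion of $\N(w)$, i.e.\ columns $a<b$ both in $[\preceq]$ with $\pi_\eta(\sigma^{-1}(a))>\pi_\eta(\sigma^{-1}(b))$, I would recover the cell $(i,j):=(\sigma^{-1}(a),b)$ and verify it lies in $\Nd^+_\sigma[\preceq]$: indeed $\sigma(i)=a<b=j$ is immediate, $\sigma^{-1}(j)=\sigma^{-1}(b)<\sigma^{-1}(a)=i$ follows from $\pi_\eta(\sigma^{-1}(b))<\pi_\eta(\sigma^{-1}(a))$ together with $\eta$-admissibility (descending on block-rows forces $\sigma^{-1}$ to be order-preserving within a block, so a strictly smaller block index gives a strictly smaller position), $\pi_\eta(i)\le\pi_\eta(j)$ is exactly "$b\in[\preceq]$" rephrased, and $\pi_\eta(i)\le\pi_\eta(\sigma(i))$ is "$a\in[\preceq]$" rephrased.

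The main obstacle I expect is bookkeeping the equivalences between the "position" conditions (like $\sigma^{-1}(j)<i$) and the "block" conditions (like $\pi_\eta(\sigma^{-1}(b))<\pi_\eta(\sigma^{-1}(a))$), where $\eta$-admissibility is used in an essential way: within a single block-row $\sigma$ is increasing, so position comparisons and block comparisons of $\sigma^{-1}$-values agree \emph{except} possibly when two values lie in the same block, and one must check that ties are handled consistently on both sides (in particular that $\Nd^+_\sigma[\preceq]$ never picks up a diagonal cell, and that inversions of $\N(w)$ require a \emph{strict} drop in block index). I would isolate this as a small sublemma: for $\sigma\in S^\eta$ and indices $a,b$, one has $\sigma^{-1}(a)<\sigma^{-1}(b)$ iff either $\pi_\eta(\sigma^{-1}(a))<\pi_\eta(\sigma^{-1}(b))$, or $\pi_\eta(\sigma^{-1}(a))=\pi_\eta(\sigma^{-1}(b))$ and $a<b$. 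Once this dictionary is in place, the bijection is a straightforward check, and counting it gives the identity. I would present the argument by defining the map, verifying it is well-defined into inversions of $\N(w)$, writing down the inverse, and noting both composites are the identity.
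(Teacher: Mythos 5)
Your proposal is correct and uses essentially the same approach as the paper: the bijection $(i,j)\mapsto(\sigma(i),j)$ you describe is exactly the paper's change of variables $k=\sigma(i)$ applied inside the set $\Nd^+_\sigma[\preceq]$, and the sublemma you isolate is precisely the paper's displayed equivalence $\sigma^{-1}(i)>\sigma^{-1}(j)\Leftrightarrow\pi_\eta(\sigma^{-1}(i))>\pi_\eta(\sigma^{-1}(j))$ for $i<j$, which encodes $\eta$-admissibility. One bookkeeping slip to fix in your verification of the inverse map: $\pi_\eta(i)\le\pi_\eta(j)$, i.e.\ $\pi_\eta(\sigma^{-1}(a))\le\pi_\eta(b)$, is \emph{not} ``$b\in[\preceq]$'' rephrased but rather follows from $a$ being a non-excedance together with $a<b$ (it is instead the non-excedance of $b$ that turns out to be redundant, following from the other three conditions); relatedly, $\Nd^+_\sigma[\preceq]$ \emph{can} contain diagonal cells $(i,i)$, which is harmless since such a cell maps to the off-diagonal pair $(\sigma(i),i)$.
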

\begin{proof}
Since $\sigma\in S^\eta$, $\sigma$ is descending on every
block-row. Thus $\sigma^{-1}$ is descending on every block-column, that is
if $i<j$  with $\pi_\eta(\sigma^{-1}(i))= \pi_\eta(\sigma^{-1}(j))$, then $ \sigma^{-1}(i) < \sigma^{-1}(j)$;
see also Figure~\ref{fig:block-column}.
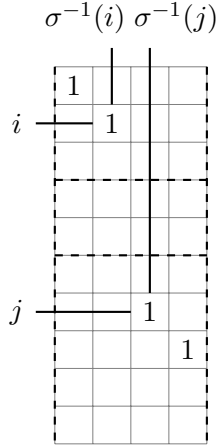
\begin{figure}[h]
  \begin{center}
\begin{tikzpicture}[scale=.5]
  \draw[step=1.0,gray,very thin,xshift=-0.5cm,yshift=-0.5cm] (3,1) grid (7,11);
\begin{scope}
  \draw (3,10)node{$1$}; \draw (4,9)node{$1$}; \draw (5,4)node{$1$};
  \draw (6,3)node{$1$}; \draw[thick,xshift=-0.5cm,yshift=-0.5cm]
  (2.5,9.5)--(4,9.5) (4.5,10)--(4.5,11.5) (2.5,4.5)--(5,4.5)
  (5.5,5)--(5.5,11.5); \draw[dashed,thick,xshift=-0.5cm,yshift=-0.5cm]
  (3,11)--(3,1) (3,8)--(7,8) (3,6)--(7,6) (7,11)--(7,1); \draw
  (1.5,9)node{$i$}; \draw (1.5,4)node{$j$}; \draw
  (3.3,11.8)node[]{$\sigma^{-1}(i)$}; \draw
  (5.7,11.8)node[]{$\sigma^{-1}(j)$};
\end{scope}
\end{tikzpicture}
	\end{center}
  \caption{A block-column of $\sigma^{-1}$.}
  \label{fig:block-column}
\end{figure}
But $\sigma^{-1}(i) > \sigma^{-1}(j)$ also implies 
$\pi_\eta(\sigma^{-1}(i))\geq \pi_\eta(\sigma^{-1}(j))$. Therefore, for $i<j$ we have
\begin{align}\label{eq:iff}
    \sigma^{-1}(i) > \sigma^{-1}(j) \Leftrightarrow \pi_\eta(\sigma^{-1}(i))> \pi_\eta(\sigma^{-1}(j)).
\end{align}
  By  definition, setting  $k=\sigma(i)$ and using Eq.~\eqref{eq:iff}, we get
\begin{align}
    |\Nd^+_\sigma [\preceq]|& =|\{ (k,j) : k<j, \sigma^{-1}(j)<\sigma^{-1}(k), \pi_\eta(\sigma^{-1}(k)) \leq \pi_\eta(j), \pi_\eta(\sigma^{-1}(k))\leq \pi_\eta(k)\}|\nonumber\\ \label{Eq.lem1.2}
                            &=    |\{ (k,j) : k<j, \pi_\eta(\sigma^{-1}(j))<\pi_\eta(\sigma^{-1}(k)) \leq \pi_\eta(j), \pi_\eta(\sigma^{-1}(k))\leq \pi_\eta(k)\}|.
\end{align}
Consider the non-exceeding subword of $\pi_\eta(\sigma^{-1})$
\begin{align*}
    \N(\pi_\eta(\sigma^{-1})) = \pi_\eta(\sigma(i_1))\cdots \pi_\eta(\sigma(i_m)),
\end{align*}
where $\pi_\eta(\sigma(i))\leq \pi_\eta(i)$ if and only if
$i\in \{i_1,\dots ,i_m\}_<=[n]\setminus\Exc(\pi_\eta(\sigma^{-1}))$. The lemma now follows by comparing  Eq.~\eqref{Eq.lem1.2} with
\begin{align*}
  \inv(\N(\pi_\eta(\sigma^{-1}))) = |\{ (i,j) :\ & i<j, \pi_\eta(\sigma^{-1}(j))<\pi_\eta(\sigma^{-1}(i)), \\ & \pi_\eta(\sigma^{-1}(j))<\pi_\eta(j), \pi_\eta(\sigma^{-1}(i))<\pi_\eta(i) \}|.\qedhere
\end{align*}
\end{proof}
We now give  a few more definitions that are needed for the next
lemma. For $l\in
\{2,\dots ,r\}$, following \cite[Section 1]{Denert/90} we set
\begin{align*}
    U_\sigma(l) &:= \{ (i,\sigma(i)) : l\leq \pi_\eta(i), \pi_\eta(\sigma(i))<l \}.
\end{align*}     
Let us further define
\begin{align*}
    U_\sigma^{-1}(l) &:= \{ (i,\sigma(i)) : \pi_\eta(i) < l, l\leq \pi_\eta(\sigma(i)) \}.
\end{align*}
The statistics $U_{\sigma}(l)$ and $  U_\sigma^{-1}(l)$ count,
respectively, the number of ones in certain north-east and south-west
quadrants of the grid,
see Figure \ref{fig:U} for an example.

For
$(j_0,\sigma(j_0))\in [\succ]$, we set 
\begin{equation}\label{eq:nsminus} N_\sigma^-(j_0) := [\succ ]\cap\{(j_0,i):\sigma(j_0)< i, j_0 <
\sigma^{-1}(i)\},\end{equation} and \begin{equation}\label{eq:nsplus}
    N_\sigma^+[\succ](j_0) :=  \{ (j_0,i) : \sigma(j_0)<i, \sigma^{-1}(i)<j_0, \pi_\eta(j_0) \leq \pi_\eta(i), \pi_\eta(j_0) > \pi_\eta(\sigma(j_0))\}.
\end{equation}
Informally, $N_\sigma^-(j_0)$ (resp.\ $N_\sigma^+[\succ](j_0)$)
counts the elements of $N_\sigma^-$ (resp.\ $N_\sigma^+[\succ]$) in the $j_0$-th row of the matrix  associated with $\sigma$.
\begin{figure}[t]
\begin{tikzpicture}[scale=.5]
  \draw[step=1.0,gray,very thin,xshift=-0.5cm,yshift=-0.5cm] (1,1) grid (11,11);
\begin{scope}
  \draw[text=orange] (2,7)node{$1$}; \draw (4,6)node{$1$}; \draw
  (7,2)node{$1$}; \draw (9,1)node{$1$}; \draw[text=orange]
  (6,10)node{$1$}; \draw[text=orange] (8,9)node{$1$};
  \draw[text=orange] (10,8)node{$1$}; \draw[text=orange]
  (3,5)node{$1$}; \draw (5,4)node{$1$}; \draw[text=orange]
  (1,3)node{$1$}; \draw[thick,xshift=-0.5cm,yshift=-0.5cm]
  (1,8)--(4,8)--(4,6)--(6,6)--(6,4)--(8,4)--(8,1);
  \draw[dashed,thick,xshift=-0.5cm,yshift=-0.5cm] (4,11)--(4,8)
  (4,8)-- (11,8) (6,11)--(6,6) (6,6)--(11,6) (8,11)--(8,4)
  (8,4)--(11,4) (1,6)--(4,6) (4,6)--(4,1) (1,4)--(6,4) (6,4)--(6,1);
  \draw[thick,xshift=-0.5cm,yshift=-0.5cm,color=orange]
  (1,1)--(1,8)--(4,8)--(4,1)--(1,1)
  (11,11)--(11,8)--(4,8)--(4,11)--(11,11);
\end{scope}
\end{tikzpicture}
\caption{$U_{\sigma}(2)$ (entries equal to $1$ in the left orange
  rectangle) and $U_{\sigma}^{-1}(2)$ (entries equal to $1$ in the
  right orange rectangle) for $\eta = (3,2,2,3)$ and
  $\sigma = 681\02435179$}
\label{fig:U}
\end{figure}

\begin{lem} \label{Lem>} Let $\eta$ be a composition of $n$ and
  $\sigma \in S^{\eta}$. Then
\begin{align*}
    |\Nd^+_\sigma [\succ]| = \imv(\E(\pi_\eta(\sigma^{-1}))) + |\Nd^-_\sigma| + \iexc({\sigma})
\end{align*}
\end{lem}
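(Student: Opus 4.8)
The plan is to mirror the strategy of Lemma~\ref{Lem=<}: translate every membership condition defining the left-hand set into a condition on the multiset permutation $w:=\pi_\eta(\sigma^{-1})$, using the substitution $k=\sigma(i)$ (equivalently $i=\sigma^{-1}(k)$). Under this substitution a pair $(i,j)\in\Nd^+_\sigma[\succ]$ becomes a pair $(k,j)$ with $k<j$, $\sigma^{-1}(j)<\sigma^{-1}(k)$, $\pi_\eta(\sigma^{-1}(k))\le\pi_\eta(j)$, and $\pi_\eta(\sigma^{-1}(k))>\pi_\eta(k)$; the last condition says precisely $k\in\Exc(w)$, i.e.\ $w_k=\pi_\eta(\sigma^{-1}(k))$ is a letter of the exceeding subword $\E(w)$. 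Since $\sigma\in S^\eta$ I can again invoke the equivalence \eqref{eq:iff} (valid for all indices, not just non-excedances) to replace $\sigma^{-1}(j)<\sigma^{-1}(k)$ by $\pi_\eta(\sigma^{-1}(j))<\pi_\eta(\sigma^{-1}(k))$, i.e.\ by $w_j<w_k$ — but one must be careful: $\sigma^{-1}(j)<\sigma^{-1}(k)$ with $k<j$ corresponds to a \emph{weak} inversion among exceeding letters once we pass to the $\pi_\eta$-values, because distinct positions can carry equal letters. This is where the weak inversion number $\imv$ rather than $\inv$ enters, in contrast to Lemma~\ref{Lem=<}.

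The main obstacle — and the reason the statement is an identity of three quantities rather than one clean equality — is that the condition "$\pi_\eta(\sigma^{-1}(k))\le\pi_\eta(j)$" in the definition of $\Nd^+_\sigma[\succ]$ is \emph{not} automatically implied by the other conditions once $k\in\Exc(w)$, whereas the naive count $\imv(\E(w))$ would want it to be. Concretely, $\imv(\E(w))$ counts pairs of excedance-positions $i<j$ (both in $\Exc(w)$) with $w_i\ge w_j$, while $\Nd^+_\sigma[\succ]$ after translation counts pairs $(k,j)$ with $k\in\Exc(w)$, $k<j$ (arbitrary $j\le n$), $w_k>w_j$ (strict, coming from $\sigma^{-1}(j)<\sigma^{-1}(k)$ together with \eqref{eq:iff}), and the extra constraint $w_k\le\pi_\eta(j)$. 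So I expect the proof to proceed by a careful case split on the position $j$: either $j\in\Exc(w)$ or $j\notin\Exc(w)$, and within each case on whether $\pi_\eta(j)$ sits above or below $w_k$. The pairs with $j\in\Exc(w)$ should recover a large chunk of $\imv(\E(w))$; the "missing'' pairs of $\imv(\E(w))$ (those where the constraint $w_k\le\pi_\eta(j)$ fails, or where $j$ is itself an excedance but the inequality is the reversed weak one) should be matched bijectively against $\Nd^-_\sigma$, and the remaining discrepancy — pairs where $k<j$, $k\in\Exc(w)$, $j\notin\Exc(w)$ that get counted in $\Nd^+_\sigma[\succ]$ but have no counterpart in $\imv(\E(w))$ — should account for the term $\iexc(\sigma)$ (one such "diagonal'' contribution per excedance, reminiscent of how $\den$ itself carries a correction $-\iexc$).

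Concretely I would organise the bookkeeping as follows. First, expand $|\Nd^+_\sigma[\succ]|$ as a sum over $j_0\in I_\sigma$ of $|N_\sigma^+[\succ](j_0)|$, using the row-by-row decomposition already set up in \eqref{eq:nsplus}; do the same for $|\Nd^-_\sigma|=\sum_{j_0\in I_\sigma}|N_\sigma^-(j_0)|$ via \eqref{eq:nsminus}. For a fixed $j_0\in I_\sigma$ write $k=\sigma(j_0)$ (so $\pi_\eta(j_0)>\pi_\eta(k)$, i.e.\ this row is "exceeding''). Then $N_\sigma^+[\succ](j_0)$ counts $i$ with $k<i$, $\sigma^{-1}(i)<j_0$, $\pi_\eta(j_0)\le\pi_\eta(i)$; and $N_\sigma^-(j_0)$ counts $i$ with $k<i$, $j_0<\sigma^{-1}(i)$, $\pi_\eta(j_0)>\pi_\eta(i)$. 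Comparing the first with the contribution of position $j_0$ to $\imv(\E(w))$ — namely the number of positions $j'>j_0$ in $\Exc(w)$ with $w_{j_0}\ge w_{j'}$, rewritten via the bijection $j'\leftrightarrow \sigma^{-1}(i)$ — and using \eqref{eq:iff} to handle the strict/weak passage, one reduces the lemma to a purely local identity for each row $j_0$; summing over the $\iexc(\sigma)$ rows then produces the claimed $+\iexc(\sigma)$. I expect that verifying this local identity — in effect, that for each excedance row the count $|N_\sigma^+[\succ](j_0)|$ equals (the $j_0$-th slice of $\imv(\E(w))$) $+\,|N_\sigma^-(j_0)|+1$ — is the technical heart, and it is exactly the place where the weak-versus-strict inversion subtlety and the "$\pi_\eta(j_0)\le\pi_\eta(i)$'' boundary condition must be reconciled, probably by exhibiting an explicit injection from $N_\sigma^-(j_0)$ into the excedance pairs not captured by $N_\sigma^+[\succ](j_0)$ together with one distinguished leftover element.
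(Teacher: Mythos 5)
Your high-level plan matches the paper closely: decompose $|\Nd^+_\sigma[\succ]|$ and $|\Nd^-_\sigma|$ row by row over the $\iexc(\sigma)$ rows with $(j_0,\sigma(j_0))\in[\succ]$, slice $\imv(\E(\pi_\eta(\sigma^{-1})))$ compatibly, and reduce to the local identity
\[
|N_\sigma^+[\succ](j_0)| \;=\; \bigl(\text{$j_0$-th slice of }\imv(\E(\pi_\eta(\sigma^{-1})))\bigr) \;+\; |N_\sigma^-(j_0)| \;+\; 1,
\]
which is precisely what the paper establishes (its steps~1 and~2 split the first summand further into $|M_\sigma^=(j_0)|+|M_\sigma^>(j_0)|$). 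You also correctly identify the weak-versus-strict subtlety caused by passing through $\pi_\eta$.

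However, the proof of the local identity is exactly where your proposal stops being a proof. You defer it to ``probably an explicit injection from $N_\sigma^-(j_0)$ \dots together with one distinguished leftover element,'' but you neither construct such a map nor explain why it would be surjective onto the right target, and an injection alone would only give an inequality. The paper's argument is not an injection at all: it introduces the auxiliary counts $U_\sigma(l)$ and $U_\sigma^{-1}(l)$ (the number of $1$s in the north-east and south-west off-diagonal quadrants at level $l=\pi_\eta(j_0)$), partitions $U_\sigma(\pi_\eta(j_0))$ into three pieces (according to whether the $1$ lies north-west, south-west, or south-east of $(j_0,\sigma(j_0))$) plus the cell $(j_0,\sigma(j_0))$ itself (this is the source of the $+1$), matches those pieces with $M_\sigma^=(j_0)$, $M_\sigma^>(j_0)$, $N_\sigma^-(j_0)$, and then invokes the elementary observation that a permutation matrix has the same number of $1$s in complementary off-diagonal blocks, so $|U_\sigma(l)|=|U_\sigma^{-1}(l)|$; finally $|U_\sigma^{-1}(\pi_\eta(j_0))|=|N_\sigma^+[\succ](j_0)|$ by a straightforward re-indexing. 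That pivot through $U_\sigma$ and $U_\sigma^{-1}$, and in particular the off-diagonal block-count symmetry, is the missing idea; without it your local identity remains unproved. One further small slip: you sum over ``$j_0\in I_\sigma$,'' but $I_\sigma$ indexes columns, not rows; the sum must be over rows $j_0$ with $(j_0,\sigma(j_0))\in[\succ]$, equivalently $j_0=\sigma^{-1}(l_0)$ for $l_0\in I_\sigma$.
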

\begin{proof}
 For a fixed excedance $l_0\in\Exc(\pi_{\eta}(\sigma^{-1}))$, write   
$j_0:= \sigma^{-1}(l_0)$ and set
$$ M_\sigma^=(j_0):=\{(j_0,\sigma(i)) : \sigma(i)<\sigma(j_0), i<j_0,
\pi_{\eta}(j_0)=\pi_{\eta}(i), \pi_{\eta}(\sigma(i))< \pi_{\eta}(i)\}$$ and
$$ M_\sigma^>(j_0):= \{(j_0,\sigma(i)) : \sigma(i)<\sigma(j_0), i<j_0,
\pi_{\eta}(j_0)<\pi_{\eta}(i), \pi_{\eta}(\sigma(i))< \pi_{\eta}(i)\}. $$ We prove the lemma in four steps.
\begin{enumerate}[1.]
\item\label{one} $\imv(\E(\pi_\eta(\sigma^{-1})))   = \sum_{(j_0,\sigma(j_0))\in [\succ]}  \left( |M_\sigma^=(j_0)| +  |M_\sigma^>(j_0)|\right)$.
  \smallskip
    \item\label{two} $|M_\sigma^=(j_0)| +  |M_\sigma^>(j_0)| + |N_\sigma^- (j_0)| +1 = |U_\sigma(\pi_\eta(j_0))|$. \smallskip
    \item\label{three} $|U_\sigma(\pi_\eta(j_0))| = |U^{-1}_\sigma(\pi_\eta(j_0))|$. \smallskip
    \item\label{four}   $|U^{-1}_\sigma(\pi_\eta(j_0))|=|N_\sigma^+[\succ](j_0)|$. \smallskip
\end{enumerate}

To prove  \ref{one} and \ref{two} we will use the following facts.
\begin{enumerate}[(i)]
\item\label{i} For $i<j$ we have $\pi_{\eta}(i)\leq \pi_{\eta}(j)$.
\item\label{ii} $\sigma$ is descending on every block-row, i.e.\ if
  $i\neq j$ with $\sigma(j)<\sigma(i)$ and
  $\pi_{\eta}(i)=\pi_{\eta}(j)$ then $j<i$.
\end{enumerate}

\smallskip
\noindent\emph{Proof of \ref{one}.} The idea here is to write the number of weak inversions of the exceeding word of $\pi_{\eta}(\sigma^{-1})$ as a sum of  equal pairs and strict inversions.  These are, in turn, refined according to the second element of the pair. Indeed, given $l_0$ and $j_0$ as before, using \eqref{ii} and setting $l:=\sigma(i)$ in the definition of $M_\sigma^=(j_0)$, we get
\begin{align}| M_\sigma^=(j_0)|&=|\{(j_0,\sigma(i)) : \sigma(i)<\sigma(j_0), i<j_0,
                                 \pi_{\eta}(j_0)=\pi_{\eta}(i), \pi_{\eta}(\sigma(i))< \pi_{\eta}(i)\}|\nonumber \\
  &=
   |\{(\sigma(i),j_0) : \sigma(i)<\sigma(j_0),
    \pi_{\eta}(j_0)=\pi_{\eta}(i), \pi_{\eta}(\sigma(i))< \pi_{\eta}(i)\}|\label{winv1}\\
  &=| \{(l,l_0) : l<l_0,
 \pi_{\eta}(\sigma^{-1}(l_0))=\pi_{\eta}(\sigma^{-1}(l)), \pi_{\eta}(l)< \pi_{\eta}(\sigma^{-1}(l))\}|.\nonumber
\end{align}
Similarly,
\begin{align}| M_\sigma^>(j_0)|&=  |\{(j_0,\sigma(i)) : \sigma(i)<\sigma(j_0), i<j_0,
 \pi_{\eta}(j_0)<\pi_{\eta}(i), \pi_{\eta}(\sigma(i))< \pi_{\eta}(i)\}|\nonumber \\
  &=  |\{(\sigma(i),j_0) : \sigma(i)<\sigma(j_0),\pi_{\eta}(j_0)<\pi_{\eta}(i), \pi_{\eta}(\sigma(i))< \pi_{\eta}(i)\}|
 \label{winv2}\\
  &= |\{(l,l_0) : l<l_0, \pi_{\eta}(\sigma^{-1}(l_0))<\pi_{\eta}(\sigma^{-1}(l)), \pi_{\eta}(l)< \pi_{\eta}(\sigma^{-1}(l))\}|.\nonumber
\end{align}
The claim follows, as 
$$\imv(\E(\pi_\eta(\sigma^{-1})))=\sum_{l_0}|\{(l,l_0) : l<l_0, \pi_{\eta}(\sigma^{-1}(l_0))\leq\pi_{\eta}(\sigma^{-1}(l)), \pi_{\eta}(l)< \pi_{\eta}(\sigma^{-1}(l))\}|,$$ where the sum ranges over $l_0\in\Exc(\pi_\eta(\sigma^{-1}))$.

\begin{figure}[t]
\begin{tikzpicture}[scale=.5]
  \draw[step=1.0,gray,very thin,xshift=-0.5cm,yshift=-0.5cm] (1,1) grid (11,11);
\begin{scope}
  \draw (10,2)node{$1$}; \draw (9,7)node{$1$}; \draw (8,10)node{$1$};
  \draw (7,6)node{$1$}; \draw (6,8)node{$1$}; \draw (5,4)node{$1$};
  \draw (4,9)node{$1$}; \draw (3,1)node{$1$}; \draw (2,3)node{$1$};
  \draw (1,5)node{$1$}; \draw[thick,xshift=-0.5cm,yshift=-0.5cm]
  (4,11)--(4,8)--(6,8)--(6,6)--(8,6)--(8,4)--(11,4);
  \draw[dashed,thick,xshift=-0.5cm,yshift=-0.5cm] (1,8)--(4,8) (4,8)--
  (4,6) (4,6)--(6,6) (6,6)--(6,4) (6,11)--(6,8) (8,11)--(8,4)
  (1,6)--(4,6) (4,6)--(4,1) (1,4)--(6,4) (6,4)--(6,1) (6,8)--(11,8)
  (8,6)--(11,6) (6,4)--(8,4) (8,4)--(8,1); \draw
  (7,8)node[text=orange]{$=$}; \draw (7,7)node[text=orange]{$>$};\draw (7,10)node[text=orange]{$>$};
\end{scope}
\end{tikzpicture}

\caption{For $\eta=(3,2,2,3)$ and $\sigma^{-1} = 846971521\03$, pick $l_0=5$, so  $j_0=\sigma^{-1}(l_0)=7$.
 The cells
  corresponding to the  elements of the set in \eqref{winv1} are
  marked with orange symbols ``='' and those corresponding to the
  elements of the set in \eqref{winv2} are marked by  orange symbols
  ``$>$''.}
  \label{fig:bijection_=,>}
\end{figure}

\smallskip
\noindent \emph{Proof of \ref{two}.} We partition $U_\sigma(\pi_\eta(j_0))=\{ (i,\sigma(i)) : \pi_\eta(j_0)\leq \pi_\eta(i), \pi_\eta(\sigma(i))<\pi_\eta(j_0) \}$ as follows:
\begin{align*}
    &\underbrace{\{ (i,\sigma(i)) : \sigma(i)<\sigma(j_0), i<j_0 \}\cap U_\sigma(\pi_\eta(j_0))}_{=: U^1_\sigma(\pi_\eta(j_0))} \\
    \cup\ &\underbrace{\{ (i,\sigma(i)) : \sigma(i)<\sigma(j_0), i>j_0 \}\cap U_\sigma(\pi_\eta(j_0))}_{=: U^2_\sigma(\pi_\eta(j_0))} \\
    \cup\ &\underbrace{\{ (i,\sigma(i)) : \sigma(i)>\sigma(j_0), i>j_0 \}\cap U_\sigma(\pi_\eta(j_0))}_{=: U^3_\sigma(\pi_\eta(j_0))}\\
    \cup\ &\{ (j_0,\sigma(j_0)) \},
\end{align*}
see Figure \ref{fig:U-subdivision} for an example.
Our goal is to rewrite the cardinalities of each of the
$U_{\sigma}^i(\pi_{\eta}(j_0))$.

\small
\begin{align*}
|U_{\sigma}^1(\pi_{\eta}(j_0))|&= |\{ (i, \sigma(i)):  \sigma(i)< \sigma(j_0), i<j_0, \pi_{\eta}(j_0)\leq \pi_{\eta}(i),\,\pi_{\eta}(\sigma(i))<\pi_{\eta}(j_0), \pi_{\eta}(\sigma(j_0))<\pi_{\eta}(j_0) \}| \\
&\overset{(i)}{=} |\{ (i, \sigma(i)):  \sigma(i)< \sigma(j_0), i<j_0, \pi_{\eta}(j_0)= \pi_{\eta}(i),\, \pi_{\eta}(\sigma(i))<\pi_{\eta}(j_0), \pi_{\eta}(\sigma(j_0))<\pi_{\eta}(j_0) \}|\\
&=  |\{ (j_0, \sigma(i)):  \sigma(i)< \sigma(j_0), i<j_0, \pi_{\eta}(j_0)= \pi_{\eta}(i),\,
 \pi_{\eta}(\sigma(i))<\pi_{\eta}(i), \pi_{\eta}(\sigma(j_0))<\pi_{\eta}(j_0) \}|\\
                               &= |M_{\sigma}^=(j_0)|,
                                 \end{align*}
\normalsize
Similarly,
\small
\begin{align*}
  |U_{\sigma}^2(\pi_{\eta}(j_0))|= &|\{ (i, \sigma(i)):  \sigma(i)<
                                     \sigma(j_0), j_0<i,
                                     \pi_{\eta}(j_0)\leq
                                     \pi_{\eta}(i),\,
                                     \pi_{\eta}(\sigma(i))<\pi_{\eta}(j_0), \pi_{\eta}(\sigma(j_0))<\pi_{\eta}(j_0) \}| \\
  \overset{(ii)}{=} & |\{ (j_0, \sigma(i)):  \sigma(i)< \sigma(j_0), j_0<i, \pi_{\eta}(j_0)< \pi_{\eta}(i),\,
                                    \pi_{\eta}(\sigma(i))<\pi_{\eta}(j_0), \pi_{\eta}(\sigma(j_0))<\pi_{\eta}(j_0) \}|\\
  = &|M_{\sigma}^>(j_0)|.
\end{align*}
\normalsize
Finally,
\small
\begin{align*}
  |U_{\sigma}^3(\pi_{\eta}(j_0))|=& |\{ (i, \sigma(i)):  \sigma(j_0)< \sigma(i), j_0<i, \pi_{\eta}(j_0)\leq \pi_{\eta}(i),\, \pi_{\eta}(\sigma(i))<\pi_{\eta}(j_0), \pi_{\eta}(\sigma(j_0))<\pi_{\eta}(j_0) \}| \\
  \overset{(i)}{=}& |\{ (j_0, \sigma(i)):  \sigma(j_0)< \sigma(i), j_0<i, \pi_{\eta}(\sigma(i))< \pi_{\eta}(j_0)\}|\\
  = &|\{ (j_0, h):  \sigma(j_0)< h, j_0<\sigma^{-1}(h), \pi_{\eta}(j_0)> \pi_{\eta}(h) \}|\\
  = &|N_{\sigma}^-(j_0)|,
\end{align*}\normalsize
where $h:=\sigma(i)$.

\begin{figure}[t]
      \begin{tikzpicture}[scale=.5]
  \draw[step=1.0,gray,very thin,xshift=-0.5cm,yshift=-0.5cm] (1,1) grid (11,11);
\begin{scope}
  \draw (6,10)node{$1$}; \draw (8,9)node{$1$}; \draw (10,8)node{$1$};
  \draw (2,7)node{$1$}; \draw (4,6)node{$1$}; \draw (3,5)node{$1$};
  \draw (5,4)node{$1$}; \draw (1,3)node{$1$}; \draw (7,2)node{$1$};
  \draw (9,1)node{$1$}; \draw[thick,xshift=-0.5cm,yshift=-0.5cm]
  (1,8)--(4,8)--(4,6)--(6,6)--(6,4)--(8,4)--(8,1);
  \draw[dashed,thick,xshift=-0.5cm,yshift=-0.5cm] (4,11)--(4,8)
  (4,8)--(11,8) (6,11)--(6,6) (6,6)--(11,6) (8,11)--(8,4)
  (8,4)--(11,4) (1,6)--(4,6) (4,6)--(4,1) (1,4)--(6,4) (6,4)--(6,1);
  \draw[thick,xshift=-0.5cm,yshift=-0.5cm,color=orange]
  (1,1)--(1,6)--(6,6)--(6,1)--(1,1); \draw[ultra
  thick,xshift=-0.5cm,yshift=-0.5cm,color=gray] (-1,5)--(5,5)--(5,12)
  (-1,4)--(5,4)--(5,0) (6,0)--(6,4)--(13,4); \draw
  (-1.7,5.5)node[text=darkgray]{$U_{\sigma}^1(\pi_{\eta}(j_0))$};
  \draw
  (-1.7,2.5)node[text=darkgray]{$U_{\sigma}^2(\pi_{\eta}(j_0))$};
  \draw
  (12.7,2.5)node[text=darkgray]{$U_{\sigma}^3(\pi_{\eta}(j_0))$};
\end{scope}
\end{tikzpicture}
  \caption{Let $\eta=(3,2,2,3)$, $\sigma= 68102435179$ and $j_0=7$. $U_{\sigma}^i(\pi_{\eta}(j_0))$, $i\in [3],$ is given by the entries equal to $1$ in the regions indicated by the grey lines intersected with the orange rectangle}
  \label{fig:U-subdivision}
\end{figure}
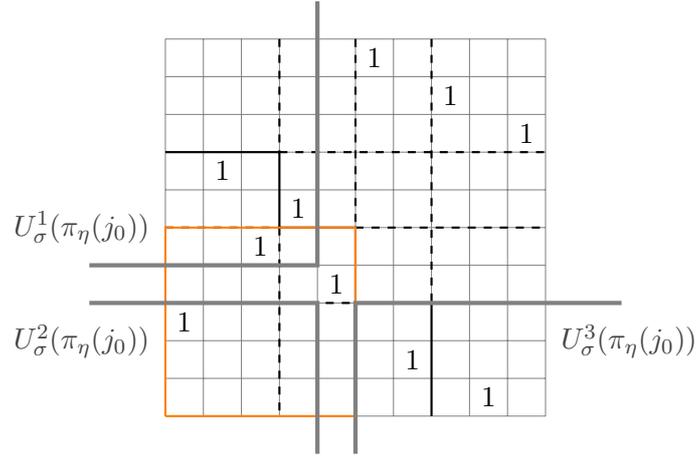

Therefore we obtain
\begin{align*}
|U_\sigma(\pi_\eta(j_0))| &= |U^1_\sigma(\pi_\eta(j_0))|+|U^2_\sigma(\pi_\eta(j_0))|+|U^3_\sigma(\pi_\eta(j_0))|+1\\
&=|M_\sigma^=(j_0)| +  |M_\sigma^>(j_0)| + |N_\sigma^- (j_0)| +1.
\end{align*}

\smallskip

 \noindent\emph{Proof of \ref{three}.} For $n_1,n_2\in \mathbb{N}$ and a permutation matrix divided into blocks  
$$
\left[
\begin{array}{c|c}
 B_{11} & B_{12} \\ \hline
 B_{21} & B_{22}
\end{array}\right]
$$
where each block $B_{ij}$ is an $n_i\times n_j$ matrix,  
we  let $l\in\mathbb N_0$ denote the number of entries equal to $1$ in
$B_{21}$. That is, $l=|\{ (i,\sigma(i)):
\sigma(i)\leq n_1  <i\}$ which is also the number of entries
equal to $1$ in $B_{12}$, while the number of entries equal to $1$ in $B_{11}$ is $n_1-l$.

\smallskip

\noindent \emph{Proof of \ref{four}.} For an excedance $l_0\in\Exc(\pi_\eta(\sigma^{-1}))$ and $j_0=\sigma^{-1}(l_0)$, we have
\begin{align*}
|U_{\sigma}^{-1}(\pi_{\eta}(j_0))| = |\{ (i, \sigma(i)) : \pi_{\eta}(i)<\pi_{\eta}(j_0), \pi_{\eta}(j_0)\leq \pi_{\eta}(\sigma(i)), \pi_{\eta}(\sigma(j_0))< \pi_{\eta}(j_0) \}|.
\end{align*}
Since $\pi_{\eta}(\sigma(j_0))\leq \pi_{\eta}(\sigma(i))$, it follows that $\sigma(j_0)<\sigma(i)$, Thus the above is equal to
\begin{align*}
 &|\{ (j_0, \sigma(i)) : \sigma(j_0)<\sigma(i), i<j_0,  \pi_{\eta}(j_0)\leq \pi_{\eta}(\sigma(i)), \pi_{\eta}(\sigma(j_0))< \pi_{\eta}(j_0) \}| \\
 =\ &|\{ (j_0, h) : \sigma(j_0)<h, \sigma^{-1}(h)<j_0,  \pi_{\eta}(j_0)\leq \pi_{\eta}(h), \pi_{\eta}(\sigma(j_0))< \pi_{\eta}(j_0) \}| \\
 =\ &|N_{\sigma}^+[\succ](j_0)|,
\end{align*}
proving \ref{four}.

\smallskip

For $j_0=\sigma^{-1}(l_0)$, where $l_0\in\Exc(\pi_\eta(\sigma^{-1})$, it now follows that
\begin{align*}
    |N_\sigma^+[\succ](j_0)| &\overset{4.}{=} |U^{-1}_\sigma(\pi_\eta(j_0))| \overset{3.}{=} |U_\sigma(\pi_\eta(j_0))| \overset{2.}{=} |M_\sigma^=(j_0)| +  |M_\sigma^>(j_0)| + |N_\sigma^- (j_0)| +1.
\end{align*}

Therefore, by Eq.~\eqref{eq:nsplus},
\begin{align*}
 | N_\sigma^+[\succ]|&= \sum_{(j,\sigma(j))\in[\succ]}  |N_\sigma^+[\succ](j)|=\sum_{(j,\sigma(j))\in[\succ]} |M_\sigma^=(j)| +  |M_\sigma^>(j)| + |N_\sigma^- (j)| +1\\
                                &=\imv(\E(\pi_\eta(\sigma^{-1}))) + |\Nd^-_\sigma| + \iexc({\sigma}),\end{align*}
                              where the latter equality follows from \ref{one}, Eq.~\eqref{eq:nsminus} and the definition of $\iexc$.
\end{proof}

\begin{proof}[Proof of Theorem~\ref{Euler-Mahonian}] Combining Lemma~\ref{Lem=<} and Lemma~\ref{Lem>}, for $\sigma\in S^{\eta}$ we get
  $$\denh(\pi_\eta(\sigma^{-1}))=\den(\sigma). $$
 The theorem follows, as by definition $\iexc(\sigma)=\exc(\pi_\eta(\sigma^{-1}))$ and the map defined in Eq.~\eqref{eq:bijection} is a bijection between $S^\eta$ and $S_\eta$.
  \end{proof}

Let $\eta=(\eta_1,\dots, \eta_r)$ be a composition of $n$. Theorems
\ref{Theorem_Han} and \ref{Euler-Mahonian} imply that  
 the genus zeta function of
 the local hereditary order $\Theta=\Theta^\eta$ can be rewritten in terms of the pair of statistics $(\maj,\des)$.
 \begin{cor}\label{local}
 \begin{equation*}
   Z_{\Theta^\eta}(s)=\frac{\sum_{w\in S_\eta} q^{\maj(w)-ns\des(w)}}{\prod_{i=0}^{n-1}(1-q^{i-ns})}.
 \end{equation*}
 \end{cor}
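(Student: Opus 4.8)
The plan is to combine the two equidistribution results already available with Proposition~\ref{genus_zeta_function}, which expresses $Z_{\Theta^\eta}(s)$ in terms of the rational function $W_\eta$. Recall that by definition
\[
W_\eta(x,y)=\frac{\sum_{\sigma\in S^\eta}x^{\den(\sigma)}y^{\iexc(\sigma)}}{\prod_{0\le i\le n-1}(1-x^iy)},
\]
so that $Z_{\Theta^\eta}(s)=W_\eta(q,q^{-ns})$ has numerator $\sum_{\sigma\in S^\eta}q^{\den(\sigma)}(q^{-ns})^{\iexc(\sigma)}=\sum_{\sigma\in S^\eta}q^{\den(\sigma)-ns\,\iexc(\sigma)}$ and denominator $\prod_{i=0}^{n-1}(1-q^{i-ns})$.

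First I would apply Theorem~\ref{Euler-Mahonian} to rewrite the numerator: since the bivariate generating polynomial $\sum_{\sigma\in S^\eta}x^{\den(\sigma)}y^{\iexc(\sigma)}$ equals $\sum_{w\in S_\eta}x^{\denh(w)}y^{\exc(w)}$, substituting $x=q$, $y=q^{-ns}$ immediately gives that the numerator of $Z_{\Theta^\eta}(s)$ is $\sum_{w\in S_\eta}q^{\denh(w)-ns\,\exc(w)}$. Then I would invoke Theorem~\ref{Theorem_Han}, which asserts $\sum_{w\in S_\eta}x^{\denh(w)}y^{\exc(w)}=\sum_{w\in S_\eta}x^{\maj(w)}y^{\des(w)}$; substituting the same values of $x,y$ turns the numerator into $\sum_{w\in S_\eta}q^{\maj(w)-ns\,\des(w)}$. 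Since the denominator $\prod_{i=0}^{n-1}(1-q^{i-ns})$ is untouched by either substitution, combining these yields precisely the claimed formula.

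There is essentially no obstacle here: the corollary is a formal consequence of chaining Proposition~\ref{genus_zeta_function}, Theorem~\ref{Euler-Mahonian} and Theorem~\ref{Theorem_Han}, since all three statements are equalities of polynomials (or rational functions) that survive the evaluation $(x,y)\mapsto(q,q^{-ns})$. The only point worth a moment's care is bookkeeping the exponents: the numerator of $W_\eta$ is a polynomial in $x$ and $y$ separately, and one must track that $\iexc$, $\exc$ and $\des$ play the role of the $y$-exponent throughout while $\den$, $\denh$ and $\maj$ play the role of the $x$-exponent, so that the single variable $t=q^{-ns}$ is correctly matched at each step. Once this is observed, the proof is a two-line substitution argument.

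\begin{proof}[Proof of Corollary~\ref{local}]
By Proposition~\ref{genus_zeta_function} and the definition of $W_\eta$, setting $t=q^{-ns}$ we have
\[
Z_{\Theta^\eta}(s)=W_\eta(q,t)=\frac{\sum_{\sigma\in S^\eta}q^{\den(\sigma)}t^{\iexc(\sigma)}}{\prod_{i=0}^{n-1}(1-q^i t)}.
\]
By Theorem~\ref{Euler-Mahonian} the numerator equals $\sum_{w\in S_\eta}q^{\denh(w)}t^{\exc(w)}$, and by Theorem~\ref{Theorem_Han} this in turn equals $\sum_{w\in S_\eta}q^{\maj(w)}t^{\des(w)}$. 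Substituting back $t=q^{-ns}$ gives
\[
Z_{\Theta^\eta}(s)=\frac{\sum_{w\in S_\eta}q^{\maj(w)-ns\,\des(w)}}{\prod_{i=0}^{n-1}(1-q^{i-ns})},
\]
as claimed.
\end{proof}
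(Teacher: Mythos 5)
Your proof is correct and follows exactly the route the paper intends: the corollary is stated as an immediate consequence of Proposition~\ref{genus_zeta_function} together with Theorems~\ref{Euler-Mahonian} and~\ref{Theorem_Han}, and your chain of substitutions is precisely that. The paper gives no further detail, so your write-up simply makes the implicit bookkeeping explicit.
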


 The next corollary follows directly from \cite[Proposition 2.12]{CarnevaleVoll/18} (see also \cite[Theorem 1.3]{CarnevaleVoll/18}) and establishes a reciprocity property for the genus
 zeta function of local hereditary orders whose associated composition is a
 \emph{rectangle}  (i.e.\ all its parts are equal).
 \begin{cor} Let  $r,m\in \mathbb N$ and $\eta=(\underbrace{m,
  \dots ,m}_{r})=:(m^r)$. Then
   \begin{equation*}
     Z_{\Theta^\eta}(s)\vert_{q\to q^{-1}}=(-1)^{rm}q^{\frac{rm(m-1)}{2}-mns} Z_{\Theta^\eta
     }(s).
   \end{equation*}
   If $\eta$ is not a rectangle, then $Z_{\Theta^\eta}(s)$ does
   not satisfy a functional equation of the form
    \begin{equation*}
     Z_{\Theta^\eta}(s)\vert_{q\to q^{-1}}=\pm q^{a-bs} Z_{\Theta^\eta}(s).
   \end{equation*}
   for $a,b\in \mathbb N_0$.
 \end{cor}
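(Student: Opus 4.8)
The plan is to reduce the claimed functional equation to a reciprocity property of the numerator polynomial $E_\eta(x,y):=\sum_{w\in S_\eta}x^{\maj(w)}y^{\des(w)}$ and then invoke \cite[Proposition~2.12]{CarnevaleVoll/18}.

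First I would rewrite $Z_{\Theta^\eta}$ using Corollary~\ref{local}: with $x=q$ and $y=q^{-ns}$ we have $Z_{\Theta^\eta}(s)=W_\eta(x,y)$ for $W_\eta(x,y)=E_\eta(x,y)\big/\prod_{i=0}^{n-1}(1-x^iy)$. Since $q$ and $q^{-s}$, and hence $q$ and $q^{-ns}$, are algebraically independent over $\mathbb Q$, no information is lost in this substitution, and $q\mapsto q^{-1}$ corresponds precisely to $(x,y)\mapsto(x^{-1},y^{-1})$; a functional equation $Z_{\Theta^\eta}(s)\vert_{q\to q^{-1}}=\pm q^{a-bs}Z_{\Theta^\eta}(s)$ with $a,b\in\mathbb N_0$ is then the same as $W_\eta(x^{-1},y^{-1})=\pm x^{a}y^{b/n}W_\eta(x,y)$ with $a\in\mathbb N_0$ and $n\mid b$. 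A one-line computation gives $\prod_{i=0}^{n-1}(1-x^{-i}y^{-1})=(-1)^nx^{-\binom n2}y^{-n}\prod_{i=0}^{n-1}(1-x^iy)$, so that $W_\eta(x^{-1},y^{-1})=(-1)^nx^{\binom n2}y^n\,E_\eta(x^{-1},y^{-1})\big/\prod_{i=0}^{n-1}(1-x^iy)$. Hence $Z_{\Theta^\eta}$ satisfies a functional equation of the stated form exactly when $E_\eta(x^{-1},y^{-1})$ is $\pm$ a monomial multiple of $E_\eta(x,y)$; evaluating such a relation at $x=y=1$ forces the sign to be $+$ (since $E_\eta(1,1)=|S_\eta|\neq0$) and comparing lowest and highest degrees pins the monomial down to $x^{-\deg_x E_\eta}y^{-\deg_y E_\eta}$ --- that is, the relation holds if and only if $E_\eta$ is palindromic.

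Now \cite[Proposition~2.12]{CarnevaleVoll/18} (see also \cite[Theorem~1.3]{CarnevaleVoll/18}) supplies exactly the needed dichotomy: $E_\eta$ is palindromic when $\eta=(m^r)$ is a rectangle and not palindromic otherwise. In the rectangular case it then remains to compute the two degrees: $\deg_y E_\eta=\max_{w\in S_\eta}\des(w)=n-m$ (an elementary fact; more generally $\max\des$ on $S_\eta$ equals $n-\max_i\eta_i$), and $\deg_x E_\eta=\max_{w\in S_\eta}\maj(w)=\max_{w\in S_\eta}\inv(w)=\binom n2-\sum_i\binom{\eta_i}2=\binom n2-r\binom m2$, using that $\maj$ and $\inv$ are equidistributed on $S_\eta$ (MacMahon) and that $\inv$ is maximised by the weakly decreasing word. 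Feeding these into the displayed identity for $W_\eta(x^{-1},y^{-1})$ gives $W_\eta(x^{-1},y^{-1})=(-1)^{rm}x^{r\binom m2}y^mW_\eta(x,y)$, and recalling $x=q$, $y=q^{-ns}$ turns this into $Z_{\Theta^\eta}(s)\vert_{q\to q^{-1}}=(-1)^{rm}q^{\frac{rm(m-1)}{2}-mns}Z_{\Theta^\eta}(s)$. If $\eta$ is not a rectangle, $E_\eta$ is not palindromic, so by the equivalence of the previous paragraph $Z_{\Theta^\eta}$ admits no functional equation of the form $Z_{\Theta^\eta}(s)\vert_{q\to q^{-1}}=\pm q^{a-bs}Z_{\Theta^\eta}(s)$ with $a,b\in\mathbb N_0$.

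I expect the only real work to lie in two spots: matching the normalisation of \cite{CarnevaleVoll/18} with the present one so as to read off the exponents and the sign cleanly (the heart of the argument, but essentially bookkeeping once the dictionary $x=q$, $y=q^{-ns}$ is fixed), and verifying the extremal values of $\maj$ and $\des$ on $S_{(m^r)}$. The step that makes the converse painless is the observation in the second paragraph that a monomial prefactor cannot repair a non-palindromic numerator.
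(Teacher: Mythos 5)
Your proof is correct and, in essence, unpacks exactly what the paper leaves implicit: the paper derives the corollary by combining Corollary~\ref{local} (the dictionary $x=q$, $y=q^{-ns}$) with the palindromicity dichotomy for $\sum_{w\in S_\eta}x^{\maj(w)}y^{\des(w)}$ supplied by \cite[Proposition~2.12]{CarnevaleVoll/18}, which is precisely the reduction you carry out. Your denominator computation, the degree bookkeeping ($\deg_x E_\eta=\binom{n}{2}-\sum_i\binom{\eta_i}{2}$, $\deg_y E_\eta=n-\max_i\eta_i$, with $n=rm$ so that $(-1)^n=(-1)^{rm}$), and the remark that a monomial-multiple relation forces the sign and exponents, all check out.
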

 
It would be interesting to establish a purely algebraic explanation of this result.

 \section{Signed and even-signed permutations}\label{sec:bd}

In this section, we define signed analogues of the Denert statistic and show that they
are, together with the number of absolute excedances, equidistributed
with the the flag major index and the number of flag descents over the
hyperoctahedral groups. For a suitable definition of type $D$ descents
and major indices, we define a type $D$ Denert statistic and number of
excedances which are equidistributed over the even-signed
permutations.

\subsection{Euler-Mahonian statistics on $B_n$}

Let $B_n$ denote the group of signed permutations on $n$ letters, i.e.\ permutations of
the set $[-n,n]$ such that $\sigma(-i)=-\sigma(i)$ for $i\in [0,n]$. 
For a signed permutation $\sigma\in B_n$, we use the window notation
$\sigma= \sigma(1)\dots \sigma(n)$. By slight abuse of notation, we
 denote by $\des(\sigma)$  and $\maj(\sigma)$ the type $A$ descent and major index statistics of the signed permutation $\sigma$, as defined in Section~\ref{sec:preliminaries}.

Well-known statistics on signed permutations (see for example
\cite{AdBrRo/01}) include the negative statistics
\begin{align*}
&\negg(\sigma)=|\{i\in [n] : \sigma(i)<0\}|, \\
&\ndes(\sigma)= \des(\sigma) + \negg(\sigma) \quad \text{and} \quad \nmaj(\sigma)= \maj(\sigma) - \sum_{\sigma(i)<0} \sigma(i)
\end{align*}
and the flag statistics
\begin{align*}
\fdes(\sigma)=2 \des(\sigma)+\chi(\sigma(1)<0) \quad \text{and} \quad \fmaj(\sigma)=2 \maj(\sigma)+\negg(\sigma),
\end{align*}
where 
\begin{align*}
\chi(\sigma(1)<0)= \begin{cases} 
1 \quad \text{ if } \sigma(1)<0 \\
0 \quad \text{ otherwise}.
\end{cases} 
\end{align*}
In \cite{AdBrRo/01} the two pairs of statistics $(\nmaj, \ndes)$ and
$(\fmaj, \fdes)$ were  shown to be equidistributed.
\begin{theorem}\cite[Corollary 4.5]{AdBrRo/01} \label{equidistr_neg_flag}
\begin{align*}
  \sum_{\sigma\in B_n} q^{\nmaj(\sigma)} t^{\ndes(\sigma)} = \sum_{\sigma\in B_n} q^{\fmaj(\sigma)} t^{\fdes(\sigma)}.
\end{align*}
\end{theorem}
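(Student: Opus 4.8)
The plan is to deduce Theorem~\ref{equidistr_neg_flag} from the fact that each of the two bivariate polynomials in its statement satisfies one and the same ``$q$-Eulerian'' identity of Carlitz type for $B_n$. Concretely, I would prove that, for every $n\ge 1$,
\[
\sum_{r\ge 0}[r+1]_q^{\,n}\,t^{r}
\;=\;\frac{\displaystyle\sum_{\sigma\in B_n}q^{\fmaj(\sigma)}\,t^{\fdes(\sigma)}}{(1-t)\prod_{i=1}^{n}(1-t^{2}q^{2i})}
\;=\;\frac{\displaystyle\sum_{\sigma\in B_n}q^{\nmaj(\sigma)}\,t^{\ndes(\sigma)}}{(1-t)\prod_{i=1}^{n}(1-t^{2}q^{2i})},
\]
where $[r+1]_q=1+q+\dots+q^{r}$. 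Granting both equalities, the theorem is immediate: the two quotients have the same numerator precisely because both equal the common left-hand side. As a sanity check, for $n=1$ both numerators equal $1+qt$, and indeed $(1-t)(1-t^{2}q^{2})\sum_{r\ge 0}[r+1]_q\,t^{r}=1+qt$.

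For the first equality I would run the classical ``sorting'' argument behind Carlitz's $q$-Eulerian identity in its signed incarnation. Expanding $[r+1]_q^{\,n}=\sum_{\mathbf a}q^{a_1+\dots+a_n}$ over words $\mathbf a=(a_1,\dots,a_n)\in\{0,1,\dots,r\}^{n}$ exhibits the left-hand side as the generating function, by $t^{r}q^{a_1+\dots+a_n}$, of pairs $(r,\mathbf a)$. To each such pair I would attach the signed permutation $\sigma\in B_n$ obtained by sorting the coordinates of $\mathbf a$ into weakly increasing order, with ties broken by index and with a sign recorded for each coordinate according to the sorting, together with the leftover data, namely a bounded weakly decreasing sequence with at most $n$ parts. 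One then checks that $(r,\mathbf a)\mapsto(\sigma,\text{leftover})$ is a bijection under which the exponent of $q$ decomposes into a $\fmaj(\sigma)$-part and a leftover part, while $r$ is constrained to be $\fdes(\sigma)$ plus a quantity read off from the leftover; summing over all $r$ (a geometric series in $t$) and over all leftover sequences (a $q$-binomial series) then contributes exactly the factor $\bigl((1-t)\prod_{i=1}^{n}(1-t^{2}q^{2i})\bigr)^{-1}$, which gives the first equality.

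The second equality is proved along the same lines, the only change being that the sorting is carried out with respect to the linear order adapted to $(\nmaj,\ndes)$ rather than to $(\fmaj,\fdes)$, i.e.\ the order underlying the $P$-partition description of these negative statistics, in which a negative letter contributes its absolute value to $\nmaj$ and one extra unit to $\ndes$; the leftover sequences then contribute the same factor $\bigl((1-t)\prod_{i=1}^{n}(1-t^{2}q^{2i})\bigr)^{-1}$ after the analogous summation. I expect the main obstacle to be precisely the verification that these two a priori rather different sorting maps both transport the weight $t^{r}q^{a_1+\dots+a_n}$ to the claimed product of a $\sigma$-monomial and a leftover monomial; in particular one must check that the term $\chi(\sigma(1)<0)$ in $\fdes$, respectively the term $-\sum_{\sigma(i)<0}\sigma(i)$ in $\nmaj$, is produced correctly by the tie-breaking rule and by the placement of the large coordinates. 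An alternative would be to bypass generating functions entirely and construct a single weight-preserving bijection $B_n\to B_n$ carrying $(\fmaj,\fdes)$ to $(\nmaj,\ndes)$; such a map is likely to be at least as delicate to describe and to verify as the two sorting bijections above, so I would keep the generating-function argument as the primary line of attack.
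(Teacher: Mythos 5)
The paper gives no proof of this result; it is cited directly from \cite[Corollary~4.5]{AdBrRo/01}. Your plan---show that both $\sum_{\sigma\in B_n}q^{\fmaj(\sigma)}t^{\fdes(\sigma)}$ and $\sum_{\sigma\in B_n}q^{\nmaj(\sigma)}t^{\ndes(\sigma)}$ arise as the numerator of the same type-$B$ Carlitz identity over the common denominator $(1-t)\prod_{i=1}^{n}(1-t^{2}q^{2i})$, and then compare---is precisely how Adin, Brenti and Roichman obtain their Corollary~4.5 from the two Carlitz-type theorems preceding it, so your route reconstructs the proof in the cited source rather than offering a genuinely different one. The stated identity is correct: beyond your $n=1$ check, at $n=2$ one has $(1-t)(1-t^{2}q^{2})(1-t^{2}q^{4})\sum_{r\ge 0}[r+1]_q^{2}\,t^{r}=(1+qt)^{2}(1+q^{2}t)=1+2qt+q^{2}t+q^{2}t^{2}+2q^{3}t^{2}+q^{4}t^{3}$, which matches both generating polynomials. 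What remains to be filled in is the technical core, namely the two weight-preserving decompositions of $\{0,\dots,r\}^{n}$: as you already flag, one must verify that the tie-breaking rule in each sorting map produces, respectively, the correction term $\chi(\sigma(1)<0)$ in $\fdes$ and the term $-\sum_{\sigma(i)<0}\sigma(i)$ in $\nmaj$, and that the leftover sequences contribute the same geometric and $q$-binomial factors in both cases. That verification is non-trivial and occupies most of the relevant sections of \cite{AdBrRo/01}, but your outline is sound and headed in the right direction.
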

Denert's statistic has been extended to signed permutations before (see, e.g.,~\cite{Fire/04}).
To the best of our knowledge, none of the type $B$ extensions previously considered gives rise, together with a suitable definition of
excedances, to an Euler-Mahonian pair in the sense of Theorem
\ref{equidistr_neg_flag}.

\begin{dfn}\cite[Definition 4.1]{Mongelli/15}
  For $\sigma\in B_n$, we define
  $|\sigma|= |\sigma(1)| \dots |\sigma(n)|\in S_n$. The absolute
  excedance number is
\begin{align*}
\excabs(\sigma) = \exc(|\sigma|) + \negg(\sigma).
\end{align*}
\end{dfn}
We define a  Denert statistic for signed permutations as follows.
\begin{dfn}
Let $\sigma\in B_n$. The negative Denert statistic is
\begin{align*}
\nden(\sigma) = \denh(|\sigma|) - \sum_{\sigma(i)<0} \sigma(i).
\end{align*}
\end{dfn}

The following theorem shows that the pairs of statistics $(\nden, \excabs)$ and $(\fmaj,\fdes)$ are equidistributed over the hyperoctahedral groups.
\begin{theorem}
\begin{align*}
\sum_{\sigma\in B_n} q^{\nden(\sigma)} t^{\excabs(\sigma)} = \sum_{\sigma\in B_n} q^{\fmaj(\sigma)} t^{\fdes(\sigma)}.
\end{align*}
\end{theorem}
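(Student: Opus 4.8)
The plan is to separate, in each of the two generating functions, an ``$S_n$-part'' (a type $A$ Euler--Mahonian polynomial) from a ``sign part'' (a product over the negated values), and then to identify the two $S_n$-parts by combining Theorem~\ref{Theorem_Han} with Theorem~\ref{equidistr_neg_flag}. Throughout I would write $\Neg(\sigma):=\{\,|\sigma(i)| : i\in[n],\ \sigma(i)<0\,\}\subseteq[n]$ for the set of negated values of $\sigma\in B_n$, so that $\negg(\sigma)=|\Neg(\sigma)|$ and $-\sum_{\sigma(i)<0}\sigma(i)=\sum_{v\in\Neg(\sigma)}v$.

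First I would treat the left-hand side. The assignment $\sigma\mapsto\big(|\sigma|,\Neg(\sigma)\big)$ is a bijection from $B_n$ onto $S_n\times 2^{[n]}$, the inverse sending $(p,N)$ to the signed permutation obtained from $p$ by negating each entry whose value lies in $N$. Since $\excabs(\sigma)=\exc(|\sigma|)+|\Neg(\sigma)|$ and $\nden(\sigma)=\denh(|\sigma|)+\sum_{v\in\Neg(\sigma)}v$ by definition, this bijection factors the sum:
\begin{align*}
\sum_{\sigma\in B_n} q^{\nden(\sigma)} t^{\excabs(\sigma)} = \Bigg(\sum_{p\in S_n} q^{\denh(p)} t^{\exc(p)}\Bigg)\sum_{N\subseteq[n]} q^{\sum_{v\in N}v}t^{|N|} = \Bigg(\sum_{p\in S_n} q^{\denh(p)} t^{\exc(p)}\Bigg)\prod_{i=1}^{n}\big(1+tq^{i}\big),
\end{align*}
and by Theorem~\ref{Theorem_Han} with $\eta=(1,\dots,1)$ the bracketed factor equals $\sum_{p\in S_n} q^{\maj(p)} t^{\des(p)}$.

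For the right-hand side I would first apply Theorem~\ref{equidistr_neg_flag} to replace $(\fmaj,\fdes)$ by $(\nmaj,\ndes)$ and then run the same argument with a slightly different bijection. The key point is that $\des$ and $\maj$ of the window of $\sigma$ depend only on the relative order of its entries, so $\des(\sigma)=\des(\std(\sigma))$ and $\maj(\sigma)=\maj(\std(\sigma))$, where $\std(\sigma)\in S_n$ is the permutation order-isomorphic to the word $\sigma(1)\cdots\sigma(n)$. Then $\sigma\mapsto\big(\std(\sigma),\Neg(\sigma)\big)$ is again a bijection from $B_n$ onto $S_n\times 2^{[n]}$: from $(w,N)$ one recovers the underlying set of integers $\{v:v\in[n]\setminus N\}\cup\{-v:v\in N\}$ and arranges it in the order prescribed by $w$. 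Since $\nmaj(\sigma)=\maj(\std(\sigma))+\sum_{v\in\Neg(\sigma)}v$ and $\ndes(\sigma)=\des(\std(\sigma))+|\Neg(\sigma)|$, the identical computation gives
\begin{align*}
\sum_{\sigma\in B_n} q^{\fmaj(\sigma)} t^{\fdes(\sigma)} = \sum_{\sigma\in B_n} q^{\nmaj(\sigma)} t^{\ndes(\sigma)} = \Bigg(\sum_{w\in S_n} q^{\maj(w)} t^{\des(w)}\Bigg)\prod_{i=1}^{n}\big(1+tq^{i}\big).
\end{align*}
Comparing the two factorisations proves the theorem.

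The argument is essentially bookkeeping; the two points that need care are the pair of bijections $B_n\cong S_n\times 2^{[n]}$ and the (standard) observation that $\des$ and $\maj$ are invariant under standardisation -- this is exactly what makes $(\nmaj,\ndes)$ separate over the second bijection while $(\nden,\excabs)$ separates over the first. I do not expect a genuine obstacle: the real content is that both sides share the same $S_n$-part, namely the type $A$ polynomial $\sum_{w\in S_n}q^{\maj(w)}t^{\des(w)}$, which on the Denert side is supplied by Han's theorem.
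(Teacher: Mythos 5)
Your proof is correct and follows essentially the same route as the paper: factor the generating function for $(\nden,\excabs)$ over $S_n\times 2^{[n]}$, invoke Theorem~\ref{Theorem_Han} to swap $(\denh,\exc)$ for $(\maj,\des)$, and then pass to $(\fmaj,\fdes)$ via Theorem~\ref{equidistr_neg_flag}. The only difference is that you spell out the factorisation of $\sum_{\sigma\in B_n}q^{\nmaj(\sigma)}t^{\ndes(\sigma)}$ by means of the standardisation bijection $\sigma\mapsto(\std(\sigma),\Neg(\sigma))$, a step the paper states without comment.
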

\begin{proof}
  Writing a signed permutation as a product of an element in the
  symmetric group and a sign vector yields:
\begin{align*}
\sum_{\sigma\in B_n} q^{\nden(\sigma)} t^{\excabs(\sigma)} &= \sum_{\sigma\in B_n} q^{\denh(|\sigma|)} q^{- \sum_{\sigma(i)<0} \sigma(i)} t^{\exc(|\sigma|)} t^{\negg(\sigma)} \\
&= \left( \sum_{\sigma\in S_n} q^{\denh(\sigma)} t^{\exc(\sigma)} \right) \left( \sum_{J\subseteq [n]} \sum_{j\in J} q^j t \right)\\
&= \left( \sum_{\sigma\in S_n} q^{\maj(\sigma)} t^{\des(\sigma)} \right) \left( \sum_{J\subseteq [n]} \sum_{j\in J} q^j t \right) \\
&= \sum_{\sigma\in B_n} q^{\nmaj(\sigma)} t^{\ndes(\sigma)},
\end{align*}
where the penultimate equality follows from Theorem~\ref{Theorem_Han}.
The claim now follows by Theorem~\ref{equidistr_neg_flag}.
\end{proof}

\subsection{Euler-Mahonian statistics on $D_n$}

We define a type $D$ analogue of Denert's statistic which, together
with a suitable definition of an excedance statistic, forms an Euler-Mahonian pair. 
The Coxeter group $D_n$ is the
subgroup of $B_n$ of even-signed permutations,
\begin{align*}
D_n = \{ \sigma \in B_n : \negg(\sigma)\equiv 0 \mod 2\}.
\end{align*}
A negative descent set on $D_n$ and corresponding descent number and
major index were defined in \cite{Biagioli/03}.
\begin{dfn} \cite[Section 3.1]{Biagioli/03} Let $\sigma\in D_n$. The type $D$ negative descent set of $\sigma$ is
$$
\DNeg(\sigma) = \{i\in [n] : \sigma(i)<-1\} \quad \text{and} \quad \dneg(\sigma)= |\DNeg(\sigma)|.
$$
The corresponding descent and major index statistics are
\begin{align*}
\ddes(\sigma) &= \des(\sigma) + \dneg(\sigma), \\
\dmaj(\sigma) &= \maj(\sigma) - \sum_{i\in\DNeg(\sigma)} \sigma(i) - \dneg(\sigma).
\end{align*}
\end{dfn}

\begin{dfn} 
For $\sigma\in D_n$, we define the number of type $D$ excedances to be
\begin{align*}
\dexc(\sigma) := \exc(|\sigma|) + \dneg(\sigma).
\end{align*}
\end{dfn}
Note that the number of type $D$ excedances of $\sigma\in D_n$
differs from the number of absolute excedances of $\sigma$
if $\sigma(i)=-1$ for some $i\in [n]$.

\begin{dfn}
We define Denert's statistic for even-signed permutations as
\begin{align*}
\dden(\sigma) &:= \denh(|\sigma|) - \sum_{i\in\DNeg(\sigma)} \sigma(i) - \dneg(\sigma) = \denh(|\sigma|) + \nsp(\sigma),
\end{align*}
where $\nsp(\sigma) := |\{(i,j)\in [n]\times [n] : i<j, \sigma(i) + \sigma(j) <0\}|$ denotes the negative sum pairs.
\end{dfn}

The next theorem shows that $(\dden, \dexc)$ and $(\dmaj, \ddes)$ are equidistributed over the even-signed permutations.

\begin{theorem}
\begin{align*}
\sum_{\sigma\in D_n} q^{\dden(\sigma)} t^{\dexc(\sigma)} = \sum_{\sigma\in D_n} q^{\dmaj(\sigma)} t^{\ddes(\sigma)}.
\end{align*}
\end{theorem}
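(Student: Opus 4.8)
The plan is to mimic the proof of the type $B$ theorem, reducing the $D_n$ statement to the known equidistribution of $(\dmaj,\ddes)$ with a flag-type pair via a factorisation of even-signed permutations. First I would write $\sigma\in D_n$ as a pair $(|\sigma|, \varepsilon)$, where $|\sigma|\in S_n$ and $\varepsilon\in\{\pm1\}^n$ is a sign vector with an even number of $-1$'s; equivalently, sum over $\pi\in S_n$ and over subsets $J\subseteq[n]$ of even cardinality recording the negative positions. Under this factorisation, $\dexc(\sigma)=\exc(\pi)+\dneg(\sigma)$ and, crucially, $\dneg(\sigma)$ counts those $i\in J$ with $\sigma(i)<-1$, i.e.\ those $i\in J$ for which $|\sigma(i)|=\pi(i)>1$. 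Similarly $\dden(\sigma)=\denh(\pi)-\sum_{i\in\DNeg(\sigma)}\sigma(i)-\dneg(\sigma)=\denh(\pi)+\sum_{i\in\DNeg(\sigma)}(\pi(i)-1)$, so the ``sign part'' of $\dden$ is $\sum_{i\in J,\ \pi(i)>1}(\pi(i)-1)$.

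The key step is therefore to isolate, for fixed $\pi\in S_n$, the sign-part generating polynomial
\begin{align*}
  P_\pi(q,t):=\sum_{\substack{J\subseteq[n]\\ |J|\text{ even}}} q^{\sum_{i\in J:\ \pi(i)>1}(\pi(i)-1)}\, t^{\,|\{i\in J:\ \pi(i)>1\}|},
\end{align*}
and to show that $\sum_{\pi\in S_n} q^{\denh(\pi)}t^{\exc(\pi)}P_\pi(q,t)$ equals $\sum_{\sigma\in D_n} q^{\dmaj(\sigma)}t^{\ddes(\sigma)}$. The point is that $P_\pi$ depends on $\pi$ only through the set $\{\pi^{-1}(k):k\ge 2\}$, i.e.\ through the position of the value $1$; writing $a$ for that position, adding an element $i\neq a$ to $J$ contributes a factor $q^{\pi(i)-1}t$, while adding $i=a$ contributes a factor $1$, and the even-cardinality constraint couples these. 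A clean way to handle the parity constraint is the standard $\tfrac12\big(\prod(1+x_i)+\prod(1-x_i)\big)$ trick. I would then run the identical factorisation on the $(\dmaj,\ddes)$ side: $\dmaj(\sigma)=\maj(\pi)-\sum_{i\in\DNeg}\sigma(i)-\dneg=\maj(\pi)+\sum_{i\in J,\ \pi(i)>1}(\pi(i)-1)$ and $\ddes(\sigma)=\des(\pi)+\dneg(\sigma)$, which produces exactly $\sum_{\pi\in S_n} q^{\maj(\pi)}t^{\des(\pi)}P_\pi(q,t)$. Since Han's Theorem~\ref{Theorem_Han} gives $\sum_\pi q^{\denh(\pi)}t^{\exc(\pi)}=\sum_\pi q^{\maj(\pi)}t^{\des(\pi)}$, one would be tempted to conclude immediately — but this is precisely where care is needed.

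The main obstacle is that $P_\pi$ is \emph{not} constant over $S_n$, so factoring out a single sign-polynomial (as in the type $B$ proof, where $\negg$ and $-\sum_{\sigma(i)<0}\sigma(i)$ depend only on $J$, not on $\pi$) is not available here. To overcome this I would refine Han's bijection: the equidistribution of $(\denh,\exc)$ and $(\maj,\des)$ must be upgraded to respect the extra statistic ``position of the value $1$'', or more precisely the multiset of values $\{\pi(i)-1:i\in\text{any fixed set}\}$ needed to reconstruct $P_\pi$. Concretely, it suffices to find a bijection $\Phi:S_n\to S_n$ with $\denh(\pi)=\maj(\Phi(\pi))$, $\exc(\pi)=\des(\Phi(\pi))$, and $P_\pi=P_{\Phi(\pi)}$; the last condition reduces to $\Phi$ preserving $\pi^{-1}(1)$, or even just preserving the polynomial $\sum_{i:\pi(i)>1}$-data, which in turn follows if $\Phi$ fixes the value $1$ in the appropriate sense. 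I expect that the Foata–Zeilberger / Han bijection, or a suitable variant, already has this property (the value $1$ is never an excedance top and behaves rigidly), and verifying this compatibility is the crux of the argument; once it is in hand, summing over $J$ with the parity trick on both sides and invoking Biagioli's equidistribution of $(\dmaj,\ddes)$ with the flag-type pair (if a flag reformulation is desired) completes the proof. If no such compatible bijection is readily citable, the fallback is to prove the identity $\sum_\pi q^{\denh(\pi)}t^{\exc(\pi)}P_\pi=\sum_\pi q^{\maj(\pi)}t^{\des(\pi)}P_\pi$ directly by grouping $S_n$ according to the value of $\pi^{-1}(1)=a$ and applying Han's theorem on each block $S_{n}$ restricted to ``$1$ in position $a$'', which is itself a multiset-permutation statement of the type covered by Theorem~\ref{Theorem_Han}.
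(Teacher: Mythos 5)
Your decomposition $\sigma\leftrightarrow(|\sigma|,J)$ is fine on the $(\dden,\dexc)$ side, but the two issues you identify are, respectively, a phantom and a misdiagnosis, and the real gap lies elsewhere.

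First, $P_\pi$ \emph{is} constant over $S_n$: by the parity trick you mention, $P_\pi=\tfrac12\bigl[\prod_{i=1}^n(1+x_i)+\prod_{i=1}^n(1-x_i)\bigr]$ with $x_i=q^{\pi(i)-1}t$ (and $x_i=1$ if $\pi(i)=1$), and the multiset $\{x_1,\dots,x_n\}=\{1,qt,\dots,q^{n-1}t\}$ is the same for every $\pi$, so $P_\pi=\prod_{k=1}^{n-1}(1+q^kt)$. There is nothing to ``refine'' in Han's bijection; the $\tfrac12$-trick already kills the $\pi$-dependence. Second, and this is the actual gap, your ``identical factorisation'' on the $(\dmaj,\ddes)$ side is wrong: you write $\dmaj(\sigma)=\maj(\pi)-\sum_{i\in\DNeg}\sigma(i)-\dneg$ with $\pi=|\sigma|$, but by definition $\dmaj(\sigma)=\maj(\sigma)-\cdots$, and $\maj(\sigma)\neq\maj(|\sigma|)$, $\des(\sigma)\neq\des(|\sigma|)$ in general (e.g.\ $\sigma=\bar1\bar2\in D_2$ has $\maj(\sigma)=1$ but $\maj(|\sigma|)=\maj(12)=0$). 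So the $(|\sigma|,J)$ decomposition simply does not produce $\sum_\pi q^{\maj(\pi)}t^{\des(\pi)}P_\pi$ on the right-hand side, and your fallbacks (a position-of-$1$ preserving bijection, or stratifying by $\pi^{-1}(1)$) are aimed at the phantom issue rather than this one.

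The paper sidesteps both points by using the \emph{coset} decomposition $D_n=\bigsqcup_{\tau\in T}\tau S_n$ with $T=\{\tau\in D_n:\des(\tau)=0\}$, rather than the $(|\sigma|,J)$ splitting. Since $\tau$ has increasing window, $\tau$ is order-preserving on $[n]$, whence $\maj(\tau\pi)=\maj(\pi)$ and $\des(\tau\pi)=\des(\pi)$; moreover $\DNeg(\tau\pi)$, $\dneg(\tau\pi)$ and $\sum_{i\in\DNeg}\tau\pi(i)$ depend only on $\tau$, and for the $(\dden,\dexc)$ side one reindexes $\pi\mapsto|\tau|^{-1}\pi$ to replace $|\tau\pi|$ by $\pi$. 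Then both sides factor as (the same $\tau$-polynomial)$\times\sum_\pi q^{\cdot}t^{\cdot}$ and Han's theorem closes the argument. Your proposal could be repaired either by switching to that coset decomposition on the $(\dmaj,\ddes)$ side, or by instead citing Biagioli's factorisation $\sum_{\sigma\in D_n}q^{\dmaj(\sigma)}t^{\ddes(\sigma)}=\prod_{k=1}^{n-1}(1+q^kt)\sum_{\pi\in S_n}q^{\maj(\pi)}t^{\des(\pi)}$, and then matching it with your (correct) constant $P_\pi=\prod_{k=1}^{n-1}(1+q^kt)$ on the $(\dden,\dexc)$ side.
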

\begin{proof}
Write  $D_n$ as $$D_n = \bigcup_{\pi\in S_n} \{ \tau\pi : \tau\in T \},$$ where $T = \{ \tau\in D_n : \des(\tau)=0 \}$ and the union is disjoint. Then
\begin{align}\label{eq:dd}
\sum_{\sigma\in D_n} q^{\dden(\sigma)} t^{\dexc(\sigma)}
&= \sum_{\pi\in S_n}\sum_{\tau \in T}q^{\denh(|\tau\pi|)-\sum_{i\in\DNeg(\tau\pi)} \tau\pi(i) - \dneg(\tau\pi)} t^{\exc(|\tau\pi|)+\dneg(\tau\pi)}.
\end{align}
It is easy to see that  $ \sum_{i\in\DNeg(\tau\pi)} \tau\pi(i)  =  \sum_{i\in\DNeg(\tau)} \tau(i)  $ and $\dneg(\tau\pi)=\dneg(\tau)$   for any $\pi\in S_n$ and $\tau \in T$. Thus Eq.~\eqref{eq:dd} is
equal to
\begin{align*}
& \sum_{\tau \in T} q^{-\sum_{i\in\DNeg(\tau)} \tau(i) - \dneg(\tau)} t^{\dneg(\tau)} \sum_{\pi\in S_n} q^{\denh(\pi)} t^{\exc(\pi)}.
\end{align*}
By Theorem~\ref{Theorem_Han}, this is equal to
\begin{align*}
= & \sum_{\tau \in T} q^{-\sum_{i\in\DNeg(\tau)} \tau(i) - \dneg(\tau)} t^{\dneg(\tau)} \sum_{\pi\in S_n} q^{\maj(\pi)} t^{\des(\pi)} \\
= & \sum_{\sigma\in D_n} q^{\dmaj(\sigma)} t^{\ddes(\sigma)},
\end{align*}
which proves the theorem.
\end{proof}

\section{Final remarks}\label{sec:final}
\subsection{Hadamard products}
 By a formula due to MacMahon \cite[§462, Vol. 2, Ch. IV,
 Sect. IX]{MacMahon/04} and Theorem~\ref{Euler-Mahonian}, it turns out
 that genus zeta functions as in Corollary~\ref{local}, viewed as rational
 functions in $q$ and $q^{-ns}$ are closely related to Hadamard
 products of the rational functions expressing genus zeta functions of
 maximal orders (i.e.\ orders whose local type is a composition with one part). In the following, given rational functions $F(y)$ and
 $G(y)$, we denote with $(F\star G)(y)$ their Hadamard product.
 Then
 \begin{equation}\label{eq:hadamard}
 W_\eta(x,y)=(1-x^ny)\Ast_{i=1}^{r}W_{(\eta_i+1)}(x,y)=(1-x^ny)\Ast_{i=1}^{r}\left(\prod\limits_{0\leq j \leq \eta_i}\frac{1}{1-x^jy}\right),\end{equation}
 where the Hadamard product is taken with respect to $y$.

 At present, we are not aware of an
 algebraic interpretation, say in terms of factorisation of ideals in~$\Theta^\eta$, of the Hadamard product in
 Eq.~\eqref{eq:hadamard}.

 Certain orbit Dirichlet series exhibit a similar behaviour;
 cf.~\cite[Proposition 1.2]{CarnevaleVoll/18}. An algebraic framework for interpreting Hadamard products of closely related generating functions was recently developed by Gessel and Zhuang \cite{Gessel/18}.

 \subsection{Factorisation}

 It is well known that classical Eulerian polynomials over $S_n$ have
 all real, simple negative roots and that $-1$ is a root if and only
 $n$ is even; see \cite{frobenius/1910}. It was proved in \cite[Lemma 2.7]{CarnevaleVoll/18} that
 this generalises to a factorisation of the $q$-Carlitz polynomial for
 $n$ even. In the same paper, it also was conjectured that a similar
 factorisation result should hold for the polynomials giving the joint
 distribution of $(\des,\maj)$ over multiset permutations associated
 with compositions which are rectangles and satisfy certain conditions
 (see \cite[Conjecture
 B]{CarnevaleVoll/18}). Han's result Theorem~\ref{Theorem_Han} and Theorem~\ref{Euler-Mahonian}  allow
 for reformulations of this conjecture in terms of the pair of
 statistics $(\exc,\denh)$ over multiset permutations and in
 terms of Denert's original statistic over $\eta$-admissible permutations.
 More precisely, the conjecture revolves
 around the existence of so-called unitary factors of
 Euler-Mahonian polynomials. A nonconstant
 polynomial $f\in\mathbb Z[x,y]$ is called \emph{unitary} if there
 exists $F\in\mathbb Z[Y]$ such that $f(x,y)=F(x^ay^b)$ for some
 $a,b\in \mathbb N_0$ and all complex roots of $F$ have absolute value $1$.
 \begin{con}
   Let $\eta$ be a composition. Then the polynomial of the joint
   distribution of $(\denh,\exc)$ over $S_{\eta}$ has a unitary
   factor if and only if $\eta=(m^r)$ is a rectangle, with $r$ even
   and $m$ odd. In this case,
   $$\sum_{w\in S_{\eta}}x^{\denh(w)}y^{\exc(w)}=\left(1+x^{\frac{rm}{2}}y\right)f^{\eta}_0(x,y)$$
   where $f^{\eta}_0(x,y)$ has no unitary factor.

   \end{con}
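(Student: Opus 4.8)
By Theorems~\ref{Theorem_Han} and~\ref{Euler-Mahonian} the conjecture is equivalent to the corresponding statement for the common numerator
\[
N_\eta(x,y):=\sum_{w\in S_\eta}x^{\denh(w)}y^{\exc(w)}=\sum_{w\in S_\eta}x^{\maj(w)}y^{\des(w)}=\sum_{\sigma\in S^\eta}x^{\den(\sigma)}y^{\iexc(\sigma)},
\]
so I work with $N_\eta$; this is essentially \cite[Conjecture~B]{CarnevaleVoll/18}, of which the case $m=1$ (that is, $\eta=(1^n)$ with $n$ even) is \cite[Lemma~2.7]{CarnevaleVoll/18}. I will use MacMahon's identity $N_\eta(x,y)=\bigl(\prod_{j=0}^{n}(1-x^jy)\bigr)\sum_{k\ge0}y^k\prod_{i=1}^{r}\binom{k+\eta_i}{\eta_i}_x$ (with $\binom{\cdot}{\cdot}_x$ the Gaussian binomial coefficient; see \cite{MacMahon/04}) to compute specialisations, together with the elementary facts $N_\eta(x,0)=1$ and $N_\eta(1,1)=|S_\eta|\neq0$.

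\emph{The ``if'' direction.} Assume $\eta=(m^r)$ with $r$ even and $m$ odd, and set $a:=rm/2=n/2\in\mathbb N$. Since $N_\eta\in\mathbb Z[x,y]$ has constant term $1$ and $1+x^ay$ is primitive, it suffices to show $N_\eta(x,-x^{-a})=0$ in $\mathbb Q(x)$, i.e.
\[
\sum_{w\in S_{(m^r)}}(-1)^{\des(w)}\,x^{\maj(w)-a\,\des(w)}=0 .
\]
The plan is to prove this vanishing by exhibiting a fixed-point-free, sign-reversing involution $\iota\colon S_{(m^r)}\to S_{(m^r)}$ with $\des(\iota(w))\equiv\des(w)+1\pmod2$ and $\maj(\iota(w))-a\,\des(\iota(w))=\maj(w)-a\,\des(w)$; equivalently, $\iota$ changes the number of descents by $\pm1$ and the major index by exactly $\pm n/2$. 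For $m=1$ such an involution is implicit in \cite[Lemma~2.7]{CarnevaleVoll/18}; the main difficulty is to carry it over to arbitrary odd $m$, where repeated letters obstruct the local moves used to insert or delete a single descent. Two auxiliary tools I would bring in: the rectangle-reciprocity corollary above (equivalently \cite[Proposition~2.12]{CarnevaleVoll/18}), which makes $N_{(m^r)}(x,y)$ essentially self-reciprocal and thereby singles out $-x^{-a}$ as the only candidate root of the required form (and already forces the central coefficient of the displayed Laurent polynomial to vanish); and the Hadamard-product description~\eqref{eq:hadamard}, which writes $N_{(m^r)}(x,y)=\bigl(\prod_{j=0}^{n}(1-x^jy)\bigr)\bigl(\prod_{j=0}^{m}\tfrac1{1-x^jy}\bigr)^{\star r}$ and invites a direct analysis of the Hadamard $r$-th power at $y=-x^{-a}$. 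I expect the bijective statement above to be the right target, and the construction of $\iota$ to be the crux.

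\emph{The ``only if'' direction.} Suppose $N_\eta$ has a unitary factor $f(x,y)=F(x^ay^b)$, $F\in\mathbb Z[Y]$ with all complex roots of absolute value $1$. By Kronecker's theorem $F$ is, up to sign, a product of cyclotomic polynomials times a power of $Y$; the power of $Y$ must be trivial and $b\ge1$, for otherwise $f$ would have $y$ as a factor or would lie in $\mathbb Z[x]\setminus\mathbb Z$, in either case contradicting $N_\eta(x,0)=1$. Specialising $x=1$ thus gives a nonconstant unitary factor $\prod_i\Phi_{k_i}(y^b)$ of $A_\eta(y):=N_\eta(1,y)=\sum_{w\in S_\eta}y^{\des(w)}$. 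Now $A_\eta$ is real-rooted with nonpositive roots (a classical property of descent polynomials of multisets), and $A_\eta(1)\neq0$; since among the polynomials $\Phi_k(Y^b)$ only $\Phi_2(Y)=1+Y$ (with $b=1$) is real-rooted and does not vanish at $Y=1$, it follows that $f(1,y)=(1+y)^c$ for some $c\ge1$, hence $(1+y)\mid A_\eta(y)$, i.e. $A_\eta(-1)=\sum_{w\in S_\eta}(-1)^{\des(w)}=0$. The remaining---and, in this direction, most substantial---step is the purely combinatorial claim that $A_\eta(-1)=0$ holds exactly for $\eta=(m^r)$ with $r$ even and $m$ odd. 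For non-rectangular $\eta$ the non-vanishing should be deducible from (indeed is close to) the absence of a $q\mapsto q^{-1}$ functional equation established via \cite[Proposition~2.12]{CarnevaleVoll/18}; for $\eta=(m^r)$ one evaluates $A_{(m^r)}(-1)$ in closed form (a signed-Eulerian-number computation, exploiting the palindromy of $A_{(m^r)}$, whose degree $m(r-1)$ is odd precisely when $r$ is even and $m$ is odd) to read off the parity conditions.

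\emph{The precise shape of the factor and the main obstacle.} In the affirmative case the ``if'' direction produces $1+x^{rm/2}y$ as a factor of $N_\eta$; to obtain the displayed equality one checks that $-1$ is a \emph{simple} root of $A_{(m^r)}(y)$ (extractable from the log-concavity of its coefficient sequence together with its palindromy), whence $c=1$, and any unitary factor of $f_0^\eta:=N_\eta/(1+x^{rm/2}y)$ would, at $x=1$, be a further power of $1+y$ dividing $A_\eta(y)/(1+y)$---impossible---so $f_0^\eta$ has no unitary factor. The overall obstacle remains the ``if'' direction: the $q=-1$-type evaluation $N_{(m^r)}(x,-x^{-rm/2})=0$, equivalently the sign-reversing involution on $S_{(m^r)}$, for general odd $m$, which is precisely the open part of \cite[Conjecture~B]{CarnevaleVoll/18}.
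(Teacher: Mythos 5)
The statement you are addressing is labelled \verb|\begin{con}| in the paper: it is a \emph{conjecture}, not a theorem, and the paper offers no proof of it. (The surrounding text makes this explicit: the authors observe that Theorems~\ref{Theorem_Han} and~\ref{Euler-Mahonian} merely allow a \emph{reformulation} of \cite[Conjecture~B]{CarnevaleVoll/18} in terms of $(\denh,\exc)$ and of $(\den,\iexc)$.) There is therefore no paper proof to compare your attempt against, and your submission is -- as you yourself say at several points -- a strategy sketch rather than a proof. You correctly identify the heart of the matter: the vanishing $N_{(m^r)}(x,-x^{-rm/2})=0$, equivalently the divisibility $\bigl(1+x^{rm/2}y\bigr)\mid N_{(m^r)}(x,y)$, is exactly the open content of \cite[Conjecture~B]{CarnevaleVoll/18}, and nothing in the present paper closes it. Your ``if'' direction accordingly stops at proposing a sign-reversing involution on $S_{(m^r)}$ which is not constructed, so no progress is made there.

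Your ``only if'' direction is a genuinely useful reduction (Kronecker plus real-rootedness of the descent polynomial forces any unitary factor, specialised at $x=1$, to be a power of $1+y$, hence $A_\eta(-1)=0$), and the palindromy observation that for $\eta=(m^r)$ the descent polynomial has odd degree $m(r-1)$ precisely when $r$ is even and $m$ is odd does settle the rectangular case. But two steps remain unsubstantiated: (i) for non-rectangular $\eta$ you assert that $A_\eta(-1)\neq0$ ``should be deducible from'' the failure of the functional equation in \cite[Proposition~2.12]{CarnevaleVoll/18}; that proposition concerns the $x\mapsto x^{-1}$ symmetry of the bivariate polynomial, which is a different property from non-vanishing of the univariate specialisation at $y=-1$, and you give no link between them; (ii) even granting $A_\eta(-1)=0$ iff $\eta=(m^r)$ with $r$ even and $m$ odd, the passage from the univariate factor $(1+y)\mid A_\eta$ back to the stated bivariate factor $1+x^{rm/2}y\mid N_\eta$ is precisely the ``if'' direction again, so the ``only if'' argument cannot yield the displayed factorisation on its own. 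In short, your proposal is a sensible roadmap that isolates the correct bottlenecks, but the conjecture remains a conjecture, both in the paper and in your write-up.
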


 \begin{acknowledgements} We would like to thank Tobias Rossmann and Christopher Voll for mathematical conversations and helpful comments. This paper is part of the second author's PhD project, supervised by Christopher Voll. The second author is funded by the Deutsche Forschungsgemeinschaft DFG through grant no.~380258175.
\end{acknowledgements}

\bibliographystyle{plain}


\end{document}